\newtheorem{theorem}{Theorem}
\newtheorem{proposition}[theorem]{Proposition}
\newtheorem{rmk}{\textit{Remark}}[section]
\begin{document}

\title{New optimized Schwarz algorithms for one dimensional Schr\"odinger equation with general potential}
\author{
F. Xing \thanks{
Laboratoire de Math\'ematiques J.A. Dieudonn\'e, UMR 7351 CNRS, University Nice Sophia Antipolis,  Team COFFEE, INRIA Sophia Antipolis M\'editerran\'ee, Parc Valrose 06108 Nice Cedex 02, France,  BRGM Orl\'eans France}
}
\maketitle
\begin{abstract}
The aim of this paper is to develop new optimized Schwarz algorithms for the one dimensional Schr\"odinger equation with linear or nonlinear potential. 
After presenting the classical algorithm which is an iterative process, we propose a new algorithm for the Schr\"odinger equation with time-independent linear potential. 
Thanks to two main ingredients (constructing explicitly the interface problem and using a direct method on the interface problem), 
the new algorithm turns to be a direct process. 
Thus, it is free to choose the transmission condition.
Concerning the case of time-dependent linear potential or nonlinear potential, we propose to use a pre-processed linear operator as preconditioner which leads to a preconditioned algorithm. 
Numerically, the convergence is also independent of the transmission condition. In addition, both of these new algorithms implemented in parallel cluster are robust, scalable up to 256 sub domains (MPI process) and take much less computation time than the classical one, especially for the nonlinear case.
\end{abstract}
%

%
\section{Introduction}

This paper deals with the optimized Schwarz method without overlap for the one dimensional Schr{\"o}dinger equation defined on a bounded spatial domain $\Omega=(a_0,b_0)$, $a_0, b_0 \in \mathbb{R}$ and $t \in (0,T)$, which reads
\begin{equation}
  \label{Schequ}
  \left\{
    \begin{array}{ll}
      \mathscr{L}u := i\partial_t u + \partial_{xx} 
      u + \mathscr{V} u = 0, \ (t,x)\in (0,T)\times (a_0,b_0),\\
      u(0,x) = u_0(x), \ x \in (a_0,b_0),
    \end{array} 
  \right.
\end{equation}
where $\mathscr{L}$ is the Schr{\"o}dinger operator, the initial value $u_0 \in L^2(\mathbb{R})$ and the general real potential $\mathscr{V}$ consists of a linear and a nonlinear part
\begin{displaymath}
\mathscr{V} =V(t,x)+f(t,x,u).
\end{displaymath}
We complete the equation with homogeneous Neumann boundary conditions on the boundary $\partial_\mathbf{n} u (t,x) = 0, \ x=a_0,b_0$ where $\partial_\mathbf{n}$ denotes the normal directive, $\mathbf{n}$ being the outward unit vector on the boundary.

Among the various domain decomposition methods, we focus our attention on the optimized Schwarz method \cite{Gander2006osw}, which has been widely studied over the past years for many different equations, like the Poisson equation \cite{Lions1990}, the Helmholtz equation \cite{Boubendir2012,Gander2002helm} and the convection-diffusion equation \cite{NATAF1995}.

In order to perform the optimized Schwarz method, the equation \eqref{Schequ} is first semi-discretized in time on the entire domain $(0,T) \times (a_0,b_0)$. The time interval
$(0,T)$ is discretized uniformly with $N_T$ intervals
of length $\Delta t$. We denote by $u_{n}$ (resp. $V_n$) an approximation of the solution $u$ (resp. $V$) at time $t_n=n\Delta t$. The usual semi-discrete in time 
scheme developed by Durán and Sanz-Serna \cite{DUR2000} applied to \eqref{Schequ} reads as
\begin{displaymath}
\begin{split}
    i\frac{u_{n} - u_{n-1}}{\Delta t} & +  \partial_{xx} \frac{u_{n} + u_{n-1}}{2}  +  \frac{V_{n}  + V_{n-1}}{2} \frac{u_{n} + u_{n-1}}{2} 
    \\  & + f(t_{n+1/2},x,\frac{u_{n} + u_{n-1}}{2}) \frac{u_{n} + u_{n-1}}{2} = 0, \ 1 \leqslant n \leqslant N_T.
    \end{split}
\end{displaymath} 
By introducing new variables $v_{n} = (u_{n} + u_{n-1})/2$
with $v_{0} = u_{0}$ and $W_{n} =  (V_{n} + V_{n-1})/2$, we
get a stationary equation defined on $\Omega$ with unknown
$v_n$ 
\begin{equation}
  \label{EqSemi}
  \mathcal{L}_{\mathbf{x}} v_{n} := \frac{2i}{\Delta t} v_n + \partial_{xx}  v_n + W_n v_n + f(t_{n+1/2},x,v_n) v_n = \frac{2i}{\Delta t} u_{n-1},
\end{equation}
The original unknown $u_{n}$ is recovered by $u_{n} = 2 v_{n} - u_{n-1}$.

The equation \eqref{EqSemi} is stationary for any $1 \leqslant n \leqslant N_T$. We can therefore apply the optimized Schwarz method. Let us decompose the spatial domain $\Omega$ into $N$ sub domains $\Omega_j =(a_j,b_j),j=1,2,...,N$ without overlap as shown in Figure \ref{figsub3} for $N=3$. The classical Schwarz algorithm is an iterative process and we identify the iteration number thanks to label $k$. We denote by $v_{j,n}^k$ the solution on sub domain $\Omega_j$ at iteration $k=1,2,...$ of the Schwarz algorithm (resp $u_{j,n}^k$). Assuming that $u_{0,n-1}$ is known, the optimized Schwarz algorithm for \eqref{EqSemi} consists in applying the following sequence of iterations for $j=1,2,...,N$
\begin{equation}
  \label{algoglobal}
  \left \{
    \begin{array}{l}
      \displaystyle   \mathscr{L}_{\mathbf{x}}  v_{j,n}^{k} = \frac{2i}{\Delta t} u_{j,n-1}, \ (x,y) \in \Omega_j,  \\[2mm]
      \partial_{\mathbf{n}_j} v^{k}_{j,n} + S_j v^{k}_{j,n} = \partial_{\mathbf{n}_{j}} v^{k-1}_{j-1,n} + S_j v^{k-1}_{j-1,n} , \ x=a_j,\\[2mm]
      \partial_{\mathbf{n}_j} v^{k}_{j,n} + S_j v^{k}_{j,n} = \partial_{\mathbf{n}_{j}} v^{k-1}_{j+1,n} + S_j v^{k-1}_{j+1,n}, \ x=b_j,
    \end{array} \right.
\end{equation}
with a special treatment for the two extreme sub domains
$\Omega_1$ and $\Omega_N$ since
the boundary conditions are imposed on $\{x=a_1\}$ and $\{x=b_N\}$ 
\begin{displaymath}
  \partial_{\mathbf{n}_1} v^{k}_{1,n}  = 0, x=a_1, \quad 
  \partial_{\mathbf{n}_N} v^{k}_{N,n}  = 0, \ x=b_N,
\end{displaymath} 
where $S_j$ is the transmission operator.

\definecolor{col1}{rgb}{0.8,1,0.8}
\definecolor{col2}{rgb}{1,0.8,0.8}
\definecolor{col3}{rgb}{0.8,0.8,1}
\begin{figure}[!htbp] 
  \centering
  \begin{tikzpicture}
    \draw[>=stealth,->, line width=2.5pt] (-0.5,0) -- (8,0);
    \draw (8,0) node[right] {$x$};

    \draw[line width=1.5pt] (0.2,-0.15) -- (0.2,0.15); 
    \draw[line width=1.5pt] (2.0,-0.15) -- (2.0,0.15);
    \draw[line width=1.5pt] (4.6,-0.15) -- (4.6,0.15); 
    \draw[line width=1.5pt] (7.5,-0.15) -- (7.5,0.15);     
    
    \draw (0.2,-0.2) node[below] {$a_0=a_1$};
    \draw (2,-0.1) node[below] {$b_1=a_2$};
    \draw (4.6,-0.1) node[below] {$b_2=a_3$};
    \draw (7.5,-0.1) node[below] {$b_2=b_0$};


\draw (1.1,0.52) node[below] {$\Omega_1$};
\draw (3.35,0.52) node[below] {$\Omega_2$};
\draw (6.12,0.52) node[below] {$\Omega_3$};
  \end{tikzpicture}
  \caption{Domain decomposition without overlap, $N=3$.}
  \label{figsub3}
\end{figure}
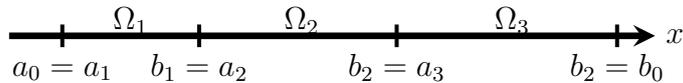

The transmission condition is an important issue for this method. In recent years, many works contribute to look for good transmission conditions to obtain faster convergence in the context of the Schwarz waveform relaxation method and the optimized Schwarz method for the Schr\"odinger equation. The widely used Robin transmission condition 
\begin{equation}
\label{condrobin}
 \mathrm{Robin:} \ S_j v = -ip \cdot v, \ p \in \mathbb{R}^+ \text{ is parameter},
\end{equation}
the optimal transmission condition for constant potential etc. are considered in \cite{Halpern2010_sch}. 
The authors of \cite{Antoine2014, XF20151d, XF20152dlin} introduce the idea using some absorbing boundary operators \cite{Antoine2009a,Antoine2011} as the transmission operator. For example,
\begin{equation}
\label{TCS2P}
\begin{split}
     S_{\mathrm{pade}}^m: \ S_j v  & = -i \sqrt{\frac{2i}{\Delta t} + V + f(v) } v ,  \\
  & \approx
    \Big( -i\sum_{s=0}^m a_s^m + i\sum_{s=1}^m a_s^m d_s^m (\frac{2i}{\Delta t} + V + f(v) + d_s^m)^{-1} \Big) v,
    \end{split}
    \end{equation}
where $a_s^m = e^{i\theta/2}/\big(m\cos^2(\frac{(2s-1)\pi}{4m}) \big)$, $d_s^m = e^{i\theta} \tan^2 (\frac{(2s-1)\pi}{4m})$, $s=0,1,...,m$ are constant coefficients associate with the Pad\'e approximation of order $m$.
These operators are constructed by using some pseudo differential techniques and could be seen as different approximate accesses to the optimal operator. However, numerically  there are always some parameters in these transmission conditions that need to be optimized for each potential $\mathscr{V}$, each size of time step $\Delta t$, each mesh and each number of sub domains $N$. This leads to huge extra tests before launching the simulation.

The parallel scalability is one of the key objectives of this method.
The authors of \cite{XF20151d} introduced some new efficient and robust algorithms in the framework of the Schwarz waveform relaxation algorithm for the one dimensional Schr\"odinger equation. Then, the new algorithms are extended to the two dimensional linear case in the context of the optimized Schwarz algorithm \cite{XF20152dlin}. These new algorithms are much more scalable and efficient than the classical one. However, choosing the transmission condition and the parameters in the transmission condition remain very inconvenient.

The objective of this article is to develop scalable optimized Schwarz algorithms on parallel computers. We propose some new algorithms based on the studies of the classical algorithm. These new algorithms have following main features theoretically or numerically: the convergence is independent of the transmission condition, the size of time step $\Delta t$, the mesh and the number of sub domains. In addition, the computation time is scalable and much less compared with the classical one.

This paper is organized as follows. In section \ref{sec_cls}, we study the classical algorithm and construct the interface problem. In Section \ref{sec_new} and \ref{sec_pd}, we present the new algorithm for the Schr\"odinger equation with time-independent linear potential and the preconditioned algorithm for  the Schr\"odinger equation with general potential. Some numerical results are shown in Section \ref{sec_num}. Finally, we draw a conclusion.

\begin{rmk}
To simplify the presentation, we only consider the Robin transmission condition \eqref{condrobin} in the following paper. Applying other more complex transmission conditions in \cite{Halpern2010_sch, Antoine2014, XF20151d} such as \eqref{TCS2P} is direct, however they are not useful in our new algorithms.
\end{rmk}

\section{Classical optimized Schwarz algorithm}
\label{sec_cls}

Let us introduce the fluxes $l_{j,n}^k$ and $r_{j,n}^k$, $j=1,2,...,N$ defined as
\begin{displaymath}
    l_{j,n}^k = \partial_{\mathbf{n}_j} v^{k}_{j,n}(a_j) + S_j v^{k}_{j,n} (a_j), \quad
    r_{j,n}^k = \partial_{\mathbf{n}_j} v^{k}_{j,n}(b_j) + S_j v^{k}_{j,n} (b_j), 
\end{displaymath}
with a special definition for the two extreme sub domains: $l_{1,n}^k = r_{N,n}^k=0$. Thus, the algorithm \eqref{algoglobal} can be splitted into local problems on sub domain $\Omega_j,j=1,2,...,N$
\begin{equation}
  \label{algolocal}
  \left \{
    \begin{array}{l}
      \displaystyle   \mathscr{L}_{\mathbf{x}}  v_{j,n}^{k} = \frac{2i}{\Delta t} u_{j,n-1}, \\[2mm]
      \partial_{\mathbf{n}_j} v^{k}_{j,n} + S_j v^{k}_{j,n} = l_{j,n}^k, \ x=a_j,\\[2mm]
      \partial_{\mathbf{n}_j} v^{k}_{j,n} + S_j v^{k}_{j,n} = r_{j,n}^k, \ x=b_j,
    \end{array} \right.
\end{equation}
and flux problems 
\begin{equation}
  \label{fluxlr}
  \left \{
    \begin{array}{l}
      l_{j,n}^{k+1} = -r_{j-1,n}^{k} + 2 S_j v_{j-1,n}^k(b_{j-1}), \ j=2,3,...,N,\\[2mm]
      r_{j,n}^{k+1} = -l_{j+1,n}^{k} + 2 S_j v_{j+1,n}^k(a_{j+1}),\ j=1,2,...,N-1.
    \end{array} \right. 
\end{equation}    

The spatial approximation is realized by the standard $\mathbb{P}_1$ finite element method. If $f=0$, then the system \eqref{algolocal} is linear. Let us denote by $\mathbf{v}_{j,n}^k$ (resp. $\mathbf{u}_{j,n}^k$) the nodal interpolation vector of $v_{j,n}^k$ (resp. $u_{j,n}^k$) with $N_j$ nodes, $\mathbb{M}_j$ the mass matrix, $\mathbb{S}_j$ the stiffness matrix,  $\mathbb{M}_{j,W_n}$ the generalized mass matrix with respect to $\int_{a_j}^{b_j} W_n v \phi dx$. Then, the matrix formulation of the $N$ local problems is given by
\begin{gather}
  \Big( \mathbb{A}_{j,n} + ip \cdot \mathbb{M}^{\Gamma_j} \Big)
  \mathbf{v}_{j,n}^k =  \frac{2i}{\Delta t} \mathbb{M}_j
  \mathbf{u}_{j,n-1}^k - \mathbb{M}^{\Gamma_j}  Q_j^{\top} 
  \begin{pmatrix}
    l_{j,n}^k\\
    r_{j,n}^k
  \end{pmatrix},  \label{distrobin} 
\end{gather} 
where $\mathbb{A}_{j,n} = \frac{2i}{\Delta t}\mathbb{M}_j - \mathbb{S}_j + \mathbb{M}_{j,W_{n}}$ and "$\cdot^{\top}$" is the standard notation of the transpose of a matrix or a vector. The boundary matrix $\mathbb{M}^{\Gamma_j}$  and the restriction matrix $Q_j$ are defined as
\begin{displaymath}
\mathbb{M}^{\Gamma_j} =
\begin{pmatrix}
1 & 0 & \cdots & 0 \\
0 & 0 & \cdots & 0 \\
\vdots & \vdots & & \vdots \\
0 & 0 & \cdots & 1
\end{pmatrix}
\in \mathbb{C}^{N_j \times N_j}, \quad
  Q_{j} = \begin{pmatrix}
    1 & 0 & 0 & \cdots & 0 & 0 \\
    0 & 0 & 0 & \cdots & 0 & 1
  \end{pmatrix}
  \in \mathbb{C}^{2 \times N_j}.
\end{displaymath}

If $f\neq 0$, then the equation \eqref{algolocal} is nonlinear. Assuming that $u_{j,n-1}$ is known, the computation of $v^{k}_{j,n}$ is accomplished by a fixed point procedure. We take $\zeta^{0}_j=u_{j,n-1}$ and compute the solution $v^{k}_{j,n}$ as the limit of the iterative procedure with respect to the label $q$, $q=1,2,...$
\begin{equation}
  \label{localpbrobin}
  \left \{
    \begin{array}{l}
      \displaystyle  \Big( \frac{2i}{\Delta t} + \partial_{xx} + W_n  \Big) \zeta^q_j = \frac{2i}{\Delta t} u_{j,n-1} - f(t_{n+1/2},x,\zeta^{q-1}_j) \zeta^{q-1}_{j} , \\[2mm]
      \partial_{\mathbf{n}} \zeta^q_j - ip \cdot \zeta^q_j = l_{j,n}^{k}, \ x=a_j,\\[2mm]
      \partial_{\mathbf{n}} \zeta^q_j -ip \cdot \zeta^q_j = r_{j,n}^{k}, \ x=b_j.
    \end{array} \right.
\end{equation}
Let us denote by $\bm{\zeta}^q_j$ the nodal interpolation of $\zeta^q_j$ and $\mathbf{b}_{f(t_{n+1/2},x,\zeta^{q}_j)}$ the vector with respect to $\int_{a_j}^{b_j} f(t_{n+1/2},x,\zeta^{q}_j) \zeta^{q}_j \phi dx$. Thus, the matrix formulation of \eqref{localpbrobin} is 
\begin{equation}
  \label{AvMNL}
  \quad \Big( \mathbb{A}_{j,n} + ip \cdot \mathbb{M}^{\Gamma_j} \Big)
  \bm{\zeta}^{q}_j =  \frac{2i}{\Delta t} \mathbb{M}_j
  \mathbf{u}_{j,n-1}^{k} - \mathbf{b}_{f(t_{n+1/2},x,\zeta^{q-1}_j)} - Q_j^{\top} 
  \begin{pmatrix}
    l_{n}^k\\
    r_{n}^k
  \end{pmatrix}.
\end{equation}

In addition, the discrete formulation of the flux problems \eqref{fluxlr} reads
\begin{equation}
  \label{TransmissionCond_disc}
  \left \{ 
    \begin{array}{ll}
      l_{j,n}^{k+1} = -r_{j-1,n}^{k} - 2 ip \cdot Q_{j-1,r} \mathbf{v}_{j-1,n}^{k}, \ j=1,2,...,N-1,\\[2mm]
      r_{j,n}^{k+1} = -l_{j+1,n}^{k} - 2 ip \cdot Q_{j+1,l} \mathbf{v}_{j+1,n}^{k},\ j=2,3,...,N.
    \end{array} \right.
\end{equation} 
where $Q_{j,l} = (1, 0, \cdots, 0, 0 ) \in \mathbb{C}^{N_j}$, $Q_{j,r} = (0, 0, \cdots, 0, 1) \in \mathbb{C}^{N_j}$.

The classical algorithm is initialized by an initial
guess of $l_{j,n}^0$ and $r_{j,n}^0$, $j=1,2,...,N$. The
boundary conditions for any sub domain $\Omega_j$ at
iteration $k+1$ involve the knowledge of the values of
the functions on adjacent sub domains $\Omega_{j-1}$ and
$\Omega_{j+1}$ at prior iteration $k$. Thanks to the
initial guess, we can \emph{solve} the Schr\"odinger equation on
each sub domain, allowing to build the new boundary
conditions for the next step, \emph{communicating} them to other
sub domains. This procedure is summarized 
in \eqref{AlgoGraph} for $N=3$ sub domains at iteration
$k+1$.
\begin{equation}
  \label{AlgoGraph}
  \begin{pmatrix}
    r_{1,n}^k \\
    l_{2,n}^k \\
    r_{2,n}^k \\
    r_{3,n}^k
  \end{pmatrix}
  \underrightarrow{\hspace{0.1cm} \text{\footnotesize{\emph{Solve}}} \hspace{0.1cm}}
  \begin{pmatrix}
    \mathbf{v}_{1,n}^k \\
    \mathbf{v}_{2,n}^k \\
    \mathbf{v}_{3,n}^k \\
  \end{pmatrix}
  \underrightarrow{ }
  \begin{pmatrix}
    - r_{1,n}^k - 2ip \cdot {Q}_{1,r} \mathbf{v}_{1,n}^k \\
    - l_{2,n}^k  - 2ip \cdot{Q}_{2,l} \mathbf{v}_{j,n}^k \\
    - r_{2,n}^k - 2ip \cdot {Q}_{2,r} \mathbf{v}_{j,n}^k \\
    - l_{3,n}^k  - 2ip \cdot {Q}_{3,l} \mathbf{v}_{N,n}^k
  \end{pmatrix}
  \underrightarrow{ \hspace{0.1cm} \text{\footnotesize{\emph{Comm.}}} \hspace{0.1cm} }
  \begin{pmatrix}
    r_{1,n}^{k+1} \\
    l_{2,n}^{k+1} \\
    r_{2,n}^{k+1} \\
    l_{3,n}^{k+1} \\
  \end{pmatrix}.
\end{equation}    
Let us define the discrete interface vector by 
\begin{displaymath}
  \mathbf{g}_n^{k}=(r_{1,n}^{k},\cdots,l_{j,n}^{k},r_{j,n}^{k},\cdots,l_{N,n}^{k})^{\top}.
\end{displaymath}
Thanks to this definition, we give a new interpretation to the algorithm which can be written as
\begin{equation}
  \label{algo_d}
  \mathbf{g}_n^{k+1} = \mathcal{R}_{h,n} \mathbf{g}_n^k = I - (I - \mathcal{R}_{h,n}) \mathbf{g}_n^k.
\end{equation} 
where $I$ is identity operator and $\mathcal{R}_{h,n}$ is an operator. The solution to this iteration process is given as the
solution to the discrete interface problem
\begin{equation*}
  \label{interfacepb_d}
  ( I - \mathcal{R}_{h,n}) \mathbf{g}_n = 0.
\end{equation*}

\section{New algorithm for time-independent linear potential}
\label{sec_new}

The first aim of this section is to show that if the potential is linear $f=0$, then the discrete interface problem can be written as
\begin{equation}
  \label{InterfacePbdisctILgd}
  (I - \mathcal{L}_{h,n}) \mathbf{g}_n = \mathbf{d}_n,
\end{equation}
where $\mathbf{d}_n$ is a vector and $\mathcal{L}_{h,n} \in \mathbb{C}^{(2N-2)\times(2N-2)}$ is a matrix (the notation ``MPI $j$''
  above the columns of the matrix will be used after in this section)
\begin{equation}
\setlength\arraycolsep{3pt}
  \label{matL}
  \mathcal{L}_{h,n} = 
  \begin{pmatrix}
    \multicolumn{1}{l}{\overbrace{\hspace{2.0em}}^{\mathrm{MPI}\ 0}} & 
    \multicolumn{2}{l}{\overbrace{\hspace{4.2em}}^{\mathrm{MPI}\ 1}} &
    \multicolumn{2}{l}{\overbrace{\hspace{4.2em}}^{\mathrm{MPI}\ 2}} &
    & 
    \multicolumn{2}{l}{\overbrace{\hspace{7.0em}}^{\mathrm{MPI}\ N-2}} &
    \multicolumn{2}{l}{\overbrace{\hspace{2.0em}}^{\mathrm{MPI}\ N-1}}
    \\
    & x^{2,1}_n & x^{2,2}_n & & & \\
    x^{1,4}_n \\
    & & & x^{3,1}_n & x^{3,2}_n \\
    & x^{2,3}_n & x^{2,4}_n \\
    & & & & & \cdots \\
    & & & x^{3,3}_n & x^{3,4}_n \\
    & & & & & & x^{N-1,1}_n & x^{N-1,2}_n\\
    & & & & &\cdots \\
    & & & & & & & & x^{N,1}_n\\
    & & & & & & x^{N-1,3}_n & x^{N-1,4}_n
  \end{pmatrix}.
\end{equation}                                                     
Especially, if the linear potential is time-independent $V=V(x)$, then
\begin{equation}
\label{Lhequ}
\mathcal{L}_{h,1}=\mathcal{L}_{h,2}=...=\mathcal{L}_{h,N_T}.
\end{equation}

Secondly, based on these observations, we propose a new Schwarz algorithm for the Schr\"odinger equation with time-independent linear potential.

\begin{proposition}
  \label{proprobin}
  Assuming that $f=0$, then the operator $\mathcal{R}_{h,n}$ is linear
  \begin{displaymath}
    \mathcal{R}_{h,n}  \mathbf{g}_n^{k} = \mathcal{L}_{h,n} \mathbf{g}_n^{k} + \mathbf{d}_{n} 
  \end{displaymath}  
  where $\mathcal{L}_{h,n}$ is a matrix as shown by \eqref{matL}. In addition, if the linear potential is time-independent $V=V(x)$, then the interface matrix $\mathcal{L}_{h,n}$ satisfies \eqref{Lhequ}.
\end{proposition}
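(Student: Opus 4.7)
The plan is to exploit the linearity of the local problem \eqref{distrobin} and substitute the explicit expression for $\mathbf{v}_{j,n}^{k}$ into the flux recurrence \eqref{TransmissionCond_disc}. Since $f=0$, the matrix $(\mathbb{A}_{j,n}+ip\,\mathbb{M}^{\Gamma_j})$ does not depend on $k$, and the source term on the right-hand side of \eqref{distrobin} is affine in $(l_{j,n}^{k},r_{j,n}^{k})$ with the $\mathbf{u}_{j,n-1}$ contribution fixed throughout the Schwarz iterations. Formally inverting gives $\mathbf{v}_{j,n}^{k}$ as an affine function of $(l_{j,n}^{k},r_{j,n}^{k})$. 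Extracting the endpoint values via $Q_{j,l}\mathbf{v}_{j,n}^{k}$ and $Q_{j,r}\mathbf{v}_{j,n}^{k}$ and plugging into the two equations of \eqref{TransmissionCond_disc} then expresses each outgoing flux $r_{j-1,n}^{k+1}$ and $l_{j+1,n}^{k+1}$ as an affine combination of $(l_{j,n}^{k},r_{j,n}^{k})$ plus a known vector depending on $\mathbf{u}_{j,n-1}$. This immediately shows the linear (affine) character of $\mathcal{R}_{h,n}$ and produces the splitting $\mathcal{R}_{h,n}\mathbf{g}_n^{k}=\mathcal{L}_{h,n}\mathbf{g}_n^{k}+\mathbf{d}_n$.

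The next step is to read off the block structure. For each interior subdomain $\Omega_j$ with $2\leqslant j\leqslant N-1$, the above procedure yields a $2\times 2$ map $(l_{j,n}^{k},r_{j,n}^{k})\mapsto(r_{j-1,n}^{k+1},l_{j+1,n}^{k+1})$ whose four entries I label $x^{j,1}_n,x^{j,2}_n,x^{j,3}_n,x^{j,4}_n$. Assembling these blocks according to the ordering $\mathbf{g}_n=(r_{1,n},l_{2,n},r_{2,n},\ldots,l_{N,n})^{\top}$: the entries $x^{j,1}_n,x^{j,2}_n$ land in the row indexed by $r_{j-1,n}^{k+1}$ and the two columns indexed by $(l_{j,n},r_{j,n})$; the entries $x^{j,3}_n,x^{j,4}_n$ land in the row indexed by $l_{j+1,n}^{k+1}$ at the same two columns. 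Since columns associated with $(l_{j,n},r_{j,n})$ are exactly what the header labels ``MPI $j-1$'' groups together, this reproduces the staggered pattern of \eqref{matL}. The two extreme subdomains need separate treatment: because $l_{1,n}=r_{N,n}=0$, $\Omega_1$ contributes only a single coefficient (of $r_{1,n}^{k}$ in the equation for $l_{2,n}^{k+1}$), denoted $x^{1,4}_n$, and symmetrically $\Omega_N$ produces only $x^{N,1}_n$. The affine part is collected into $\mathbf{d}_n$.

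The time-independence statement \eqref{Lhequ} is then almost free. The only $n$-dependence in the ingredients used to build $\mathcal{L}_{h,n}$ enters through the generalized mass matrix $\mathbb{M}_{j,W_{n}}$ in $\mathbb{A}_{j,n}=\tfrac{2i}{\Delta t}\mathbb{M}_j-\mathbb{S}_j+\mathbb{M}_{j,W_{n}}$, hence through $W_n=(V_n+V_{n-1})/2$. If $V=V(x)$ is time-independent, then $W_n=V$ for every $n$, so $\mathbb{A}_{j,n}$ is independent of $n$, the boundary-to-boundary Green's matrix $Q_j(\mathbb{A}_{j,n}+ip\,\mathbb{M}^{\Gamma_j})^{-1}\mathbb{M}^{\Gamma_j}Q_j^{\top}$ that produces the $x^{j,\cdot}_n$ coefficients is the same for all $n$, and the equality $\mathcal{L}_{h,1}=\cdots=\mathcal{L}_{h,N_T}$ follows. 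Note that, by contrast, $\mathbf{d}_n$ does depend on $n$ through $\mathbf{u}_{j,n-1}$, which is why only the matrix part is claimed to be constant.

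No serious obstacle is anticipated: the argument is essentially algebraic, combining invertibility of the local finite-element matrix with the affine flux update. The most error-prone point is index bookkeeping — checking that each nonzero entry of $\mathcal{L}_{h,n}$ can be traced back to exactly one subdomain's local inverse, so that the MPI $j$ column group inherits precisely the (at most four) entries $x^{j,\cdot}_n$ coming from $\Omega_j$. Once this correspondence is set, the sparsity pattern and the time-independence claim are both immediate consequences of the linear structure of \eqref{distrobin}--\eqref{TransmissionCond_disc}.
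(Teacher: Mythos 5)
Your proposal is correct and follows essentially the same route as the paper: invert the local system \eqref{distrobin} to express $\mathbf{v}_{j,n}^{k}$ affinely in $(l_{j,n}^{k},r_{j,n}^{k})$, substitute into \eqref{TransmissionCond_disc} to obtain the block entries $x^{j,s}_n$ and the vector $\mathbf{d}_n$, and observe that $V=V(x)$ makes $\mathbb{M}_{j,W_n}$, hence $\mathbb{A}_{j,n}$ and all the $x^{j,s}_n$, independent of $n$. The paper simply records the resulting explicit formulas \eqref{matXRobin}--\eqref{dRobin} rather than narrating the substitution, so the two arguments coincide.
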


\begin{proof}
  Firstly, by some straight forward calculations using \eqref{distrobin} and \eqref{TransmissionCond_disc}, we can verify that
  \begin{equation}
    \label{matXRobin}
    \begin{aligned}
      & x^{j,1}_n =  -I - 2ip \cdot {Q}_{j,l} (\mathbb{A}_{j,n}
      +ip \cdot \mathbb{M}^{\Gamma_j})^{-1}
      \mathbb{M}^{\Gamma_{j}} {Q}_{j,l}^{\top}, \\
      & x^{j,2}_n =  - 2ip \cdot {Q}_{j,l} (\mathbb{A}_{j,n}
      +ip \cdot \mathbb{M}^{\Gamma_j})^{-1}
      \mathbb{M}^{\Gamma_{j}} {Q}_{j,r}^{\top}, \\
      & x^{j,3}_n =  - 2ip \cdot {Q}_{j,r} (\mathbb{A}_{j,n}
      +ip \cdot \mathbb{M}^{\Gamma_j})^{-1}
      \mathbb{M}^{\Gamma_{j}} {Q}_{j,l}^{\top}, \\
      & x^{j,4}_n =  -I - 2ip \cdot {Q}_{j,r} (\mathbb{A}_{j,n}
      +ip \cdot \mathbb{M}^{\Gamma_j})^{-1}
      \mathbb{M}^{\Gamma_{j}} {Q}_{j,r}^{\top},
    \end{aligned}
  \end{equation}  
  and $\mathbf{d}_{n} = (d_{n,1,r},..., d_{n,j,l}, d_{n,j,r}, ..., d_{n,N,r})^{\top}$ with
  \begin{equation}
    \label{dRobin}
    \begin{split}
      d_{n,j,l} & = 2ip \cdot \frac{2i}{\Delta t} \cdot {Q}_{j-1,r} (\mathbb{A}_{j-1,n} +ip \cdot \mathbb{M}^{\Gamma_{j-1}})^{-1} \mathbb{M}_{j-1} \mathbf{u}_{j-1,n}, \ j=2,3,...,N, \\
      d_{n,j,r} & = 2ip \cdot \frac{2i}{\Delta t} \cdot {Q}_{j+1,l} (\mathbb{A}_{j+1,n} +ip \cdot \mathbb{M}^{\Gamma_{j+1}})^{-1} \mathbb{M}_{j+1} \mathbf{u}_{j+1,n}, \ j=1,2,...,N-1.
    \end{split}
  \end{equation}
  Secondly, since $V=V(x)$, then for $j=1,2,...,N$, we have
  \begin{gather*}
    \label{Aeq}
    \mathbb{M}_{j,W_1}=\mathbb{M}_{j,W_2}=...=\mathbb{M}_{j,W_{N_T}}, \\  \Rightarrow \mathbb{A}_{j,1}=\mathbb{A}_{j,2}=...=\mathbb{A}_{j,N_T} = \frac{2i}{\Delta t} \mathbb{M}_{j}-\mathbb{S}_{j} + \mathbb{M}_{j,W_n}.
  \end{gather*}
  Thus, the elements of $\mathcal{L}_{h,n}$ satisfy
  \begin{displaymath}
  x^{j,s}_1 = x^{j,s}_2 =... =x^{j,s}_{N_T},
  \end{displaymath}
  for $j=1,2,...,N$, $s=1,2,3,4$.
\end{proof}

Thanks to the analysis yielded in the previous proposition, we can build explicitly $\mathcal{L}_{n,h}$ with not much computation.  
According to \eqref{matXRobin}, to know the elements $x^{j,1}$ and $x^{j,3}$ (resp. $x^{j,2}$ and $x^{j,4}$), it is necessary to compute one time the application of $ (\mathbb{A}_{j,n} + ip \cdot \mathbb{M}^{\Gamma_j})^{-1}$ to vector $\mathbb{M}^{\Gamma_{j}} Q_{j,l}^{\top}$ (resp. $\mathbb{M}^{\Gamma_{j}} Q_{j,r}^{\top}$).
%
%
In total, to know the four elements $x^{j,1}_n$, $x^{j,2}_n$, $x^{j,3}_n$ and $x^{j,4}_n$, it is enough to compute two times the application of $ (\mathbb{A}_{j,n} + ip \cdot \mathbb{M}^{\Gamma_j})^{-1}$ to vector. In other words, this amounts to solve the Schr\"odinger equation on each sub domain two times to build the matrix $\mathcal{L}_{h,n}$. In addition, let us note that $\mathcal{L}_{h,1}=\mathcal{L}_{h,2}=...=\mathcal{L}_{h,N_T}$ if $V=V(x)$.

The construction of the vector $\mathbf{d}_n$ is similar. According to
\eqref{dRobin}, one needs to apply $(\mathbb{A}_{j,n} +ip \cdot \mathbb{M}^{\Gamma_{j}})^{-1}$ to vector $\mathbb{M}_{j} \mathbf{u}_{j,n}$ for $j=1,2,...,N$. In
other words, we solve the Schr\"odinger equation on each sub domain
one time. The vector is stored in a distributed manner using the PETSc library \cite{petsc-user-ref}
\begin{equation*}
  \mathbf{d}_n = \big( 
\underbrace{d_{n,1,r}}_{\text{MPI 0}}, 
\underbrace{d_{n,2,l}, d_{n,2,r}}_{\text{MPI 1}}, \cdots,\underbrace{d_{n,j,l}, d_{n,j,r}}_{\text{MPI j-1}}, \cdots, \underbrace{d_{n,N,l}}_{\text{MPI N-1}} \big)^{\top}.
\end{equation*}

Let us now present the new algorithm for the Schr\"odinger equation with time-independent linear potential.
\begin{center}
  \begin{minipage}{0.9\textwidth}
    \begin{algorithm}[H]
      \caption{New algorithm, time-independent linear potential}
      \label{Algo_new}
      {\footnotesize 1:} Build the interface matrix $\mathcal{L}_{h,n}$ explicitly.\\
      {\footnotesize 2:} The initial datum is $u_{0}$. \\
      \For{$n=1,2,...,N_T$}{
        {\footnotesize 2.1:} Build the vector $\mathbf{d}_{n}$ on time step $n$,\\
        {\footnotesize 2.2:} Solve the linear system
        \begin{equation}
          \label{newalgoILd}
          (I-\mathcal{L}_{h,n})\mathbf{g}_{n}=\mathbf{d}_{n}.
        \end{equation}        \\
        {\footnotesize 2.3:} Solve the Schr\"{o}dinger equation on time step $n$ on each sub domain using the flux from step 2.2 to compute $u_{n}$. 
      }
    \end{algorithm}
  \end{minipage}
\end{center}

Concerning the computational phase 2.2 in the new algorithm and the storage of the matrix $\mathcal{L}_{h,n}$, there are some choices.
\begin{itemize}[leftmargin=0pt]
\item[] \textbf{Choice 1.} The transpose of $\mathcal{L}_{h,n}$ is stored in a distributed manner using the library PETSc. As shown by \eqref{matXRobin}, the first column of $\mathcal{L}_{h,n}$ lies in MPI process 0. The second and third columns are in MPI process 1, and so on for other processes. The linear system \eqref{newalgoILd} is solved by any iterative methods, such as the fixed point method and the Krylov methods (GMRES, BiCGStab) \cite{Saad2003}. Note that if the fixed point method is used, then formally we recover the classical algorithm \eqref{algo_d}.
  %
  %
\item[] \textbf{Choice 2.} The transpose of $\mathcal{L}_{h,n}$ is stored in a distributed manner using the library PETSc. Unlike the first choice, we solve the linear system \eqref{newalgoILd} by a parallel LU direct method (MPI). One obvious benefit is that the algorithm has no convergence problem. Thus, it is possible to use any $p \in \mathbb{R}^+$ in the Robin transmission condition without looking for the optimal one which makes the algorithm converge with less iterations for each size of time step, each mesh and each number of sub domains. 
  
The size of $\mathcal{L}_{h,n}$ is $2N-2$, which is small for a LU direct method even when $N$ is large, such as $N=256$. 
Solving the linear system needs few arithmetic operations.
However, two points are not negligible. It is well known that the LU direct method is inherently sequential. In addition, one MPI process contains only two rows of $\mathcal{L}_{h,n}^{\top}$ (four  elements). 
Thus, the parallel LU direct method could suffer considerable communication costs.
\item[] \textbf{Choice 3.} The matrix $\mathcal{L}_{h,n}$ is stored locally in one MPI process after construction. For $n=1,2,...,N_T$, the distributed vector $\mathbf{d}_n$ is firstly gathered to one MPI process, then the linear system \eqref{newalgoILd} is solved by a sequential LU direct method. Finally, the solution $\mathbf{g}_n$ is broadcased to all MPI processes. This choice is a variation of the second one. It can avoid communications from the parallel implementation of the LU method.
\end{itemize}
%


\begin{rmk}
In the new algorithm with the choice 2 or 3, it is enough to do one time the LU factorization of $I-\mathcal{L}_{h,n}$.
\end{rmk}

There are two main novelties in the context of the optimized Schwarz method for the one dimensional Schr\"odinger equation. We construct explicitly the matrix $\mathcal{L}_{h,n}$, while in the classical algorithm, $\mathcal{L}_{h,n}$ remains an abstract operator. It is not usual to build explicitly such huge operator, but as we have seen, its computation is not costly. 
We use the direct method on the interface problem, thus the algorithm is independent of the transmission condition since the convergence properties of $I-\mathcal{L}_{h,h}$ has no influence on direct linear solver.


\section{Preconditioned algorithm for general potential}
\label{sec_pd}

In the previous sections, we have interpreted the classical algorithm for the Schr{\"o}dinger equation as \eqref{algo_d}. However, if $f \neq 0$, then the operator $\mathcal{R}_{h,n}$ is nonlinear. If $f=0$ and $V=V(t,x)$, it is necessary to construct the interface matrix $\mathcal{L}_{h,n}$ for each time step $n$. Thus, the new algorithm is not suitable here. Instead, to reduce the number of iterations required for convergence, inspired from \cite{XF20151d}, we add a preconditioner $P^{-1}$ ($P$ is a non singular matrix) in \eqref{algo_d} or \eqref{InterfacePbdisctILgd} which leads to the preconditioned algorithm:
\begin{itemize}[leftmargin=*]
\item for $\mathscr{V}=V(t,x)$,
  \begin{align}
    & \mathbf{g}_n^{k+1} = I - P^{-1} (I - \mathcal{R}_{h,n}), \label{chp2_algopd_L} \\
    & P^{-1} (I - \mathcal{L}_{h,n})\mathbf{g}_n = P^{-1} \mathbf{d}_n,   \label{chp2_algopd_Lproof} 
  \end{align}
\item for $\mathscr{V}=V(t,x)+f(t,x,u)$,
  \begin{equation}
    \label{algopd_NL}
    \mathbf{g}_n^{k+1} = I - P^{-1} (I - \mathcal{R}_{h,n}) \mathbf{g}_n^k.
  \end{equation} 
\end{itemize}	

We now turn to explain which preconditioner is used.
The interface problem for the free Schr\"odinger equation ($V=0$, $f=0$) is 
\begin{displaymath}
  (I - \mathcal{L}_{0})\mathbf{g}_n = \mathbf{d}_n,
\end{displaymath}
where the symbol $\mathcal{L}_0$ is used to highlight here the potential is zero. The transmission condition is the same as for \eqref{Schequ}. We propose for time-dependent linear or nonlinear potential the preconditioner as
\begin{displaymath}
  P = I - \mathcal{L}_0.
\end{displaymath}
We have three reasons to believe that this is a good choice.
%
\begin{enumerate}[leftmargin=*]
\item
  {
    Intuitively, the Schr{\"o}dinger operator without potential is a roughly approximating of the Schr{\"o}dinger operator with potential:
    \begin{displaymath}
      i\partial_tu + \partial_{xx}u \approx i\partial_t u + \partial_{xx}u  + Vu + f(t,x,u)u.
    \end{displaymath} 
    In other words, $Vu+f(t,x,u)u$ is a perturbation of the free Schr\"odinger operator.

    If $\mathscr{V}=0$, then $I - \mathcal{L}_0 = I - \mathcal{L}_{h,n} = I - (\mathcal{R}_{h,n}-\mathcal{R}_{h,n} \cdot \mathbf{0})$ where $\mathbf{0}$ is the zero vector. Thus, the matrix $\mathcal{L}_{0}$ could be a good approximation of the matrix $\mathcal{L}_{h,n}$ and of the nonlinear operator $\mathcal{R}_{h,n}-\mathcal{R}_{h,n} \cdot \mathbf{0}$:
    \begin{equation}
      \label{PL0}
      P = I - \mathcal{L}_0  \approx I - \mathcal{L}_{h,n},\quad
      P = I - \mathcal{L}_0  \approx I - (\mathcal{R}_{h,n}-\mathcal{R}_{h,n} \cdot \mathbf{0}),
    \end{equation}
  }
  \item
  The matrix $\mathcal{L}_0$ can be constructed easily as explained in the previous section.
  \item
  The implementation of the preconditioner and the storage of $P$ are same as that of \eqref{newalgoILd}, which are discussed in the previous section by the three choices. In effect, for any vector $y$, the vector $z:=P^{-1} y$ is computed by solving the linear system
\begin{equation}
  \label{Pxg}
  P z = (I - \mathcal{L}_0) z= y,
\end{equation}   
as the inverse of a matrix numerically is too expensive. 
Note that using the choice 2 or 3 ensures that the application of the preconditioner is robust.
\end{enumerate}
%
%

%

\section{Numerical results}
\label{sec_num}

The physical domain $(a_0,b_0)=(-16,16)$ is decomposed into $N$ equal sub domains without overlap. The final time is fixed to be $T=1.0$. The initial data is
\begin{displaymath}
  u_0(x) = e^{-(x+1)^2+i(x+1)}.
\end{displaymath}
Let us consider three different potentials in the following three sub sections
\begin{itemize}
\item time-independent linear potential: $\mathscr{V}=-x^2$, 
\item time-dependent linear potential: $\mathscr{V}=5tx$,
\item nonlinear potential: $\mathscr{V}=\frac{x^2}{10}-|u|^2$,
\end{itemize}
which give rise to solutions that propagate to the right side and undergoes dispersion (see Figure \ref{solinit}). This first subsection devotes to the comparison of the classical algorithm and the new algorithm. In section \ref{sec_vtx}, we mainly study the spectral properties of the classical algorithm and the preconditioned algorithms, i.e. the eigenvalues of the operators $I-\mathcal{L}_{h,n}$ and $P^{-1}(I-\mathcal{L}_{h,n})$. The last one compares the classical algorithm and the preconditioned algorithm for the general nonlinear potential.

\begin{figure}[!htbp] 
\centering
\includegraphics[width=0.7\textwidth]{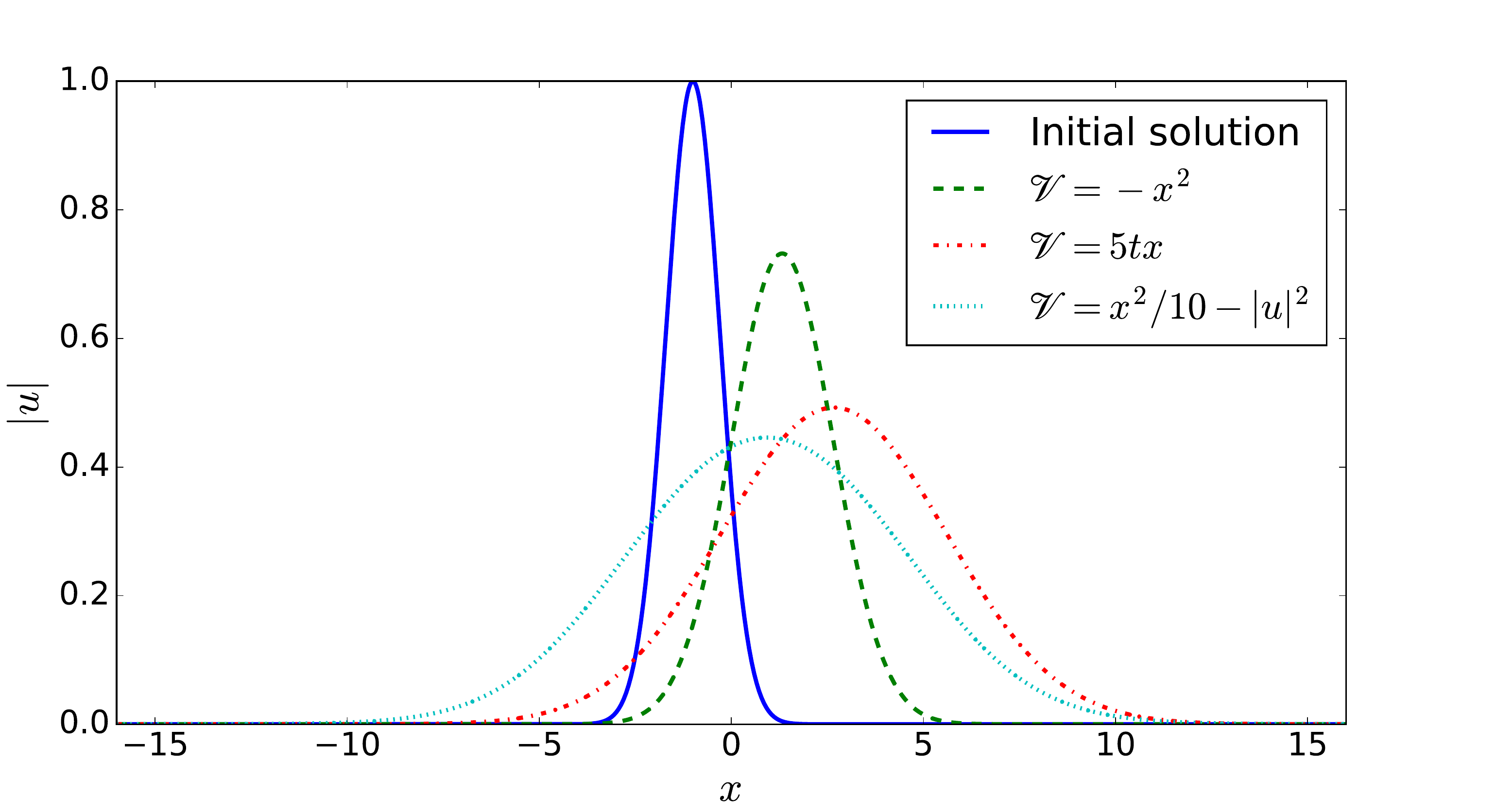}
\caption{Initial solution $|u_0|$ and numerical solutions of the Sch\"odinger equation with different potentials at final time $T=1.0$. The time step is $\Delta t=0.001$. The meshes are $\Delta x=1.0\times 10^{-5}$ for $\mathscr{V}=-x^2$ and $\Delta x=5.0\times 10^{-5}$ for the others.}
\label{solinit}
\end{figure}

All the numerical tests are implemented in a CPU cluster (16 cores/node, Intel Sandy Bridge E5-2670, 64GB memory/node). We fix always one core per MPI process, one MPI process per sub domain and 16 MPI processes per node. The communications are handled by OpenMPI 1.6.5 (GCC 4.8). The linear systems \eqref{distrobin} and \eqref{AvMNL} related to the finite element method are solved by the LU direct method using the MKL Pardiso library. 

\begin{rmk}
  Without less of generality, we consider the number of iterations of the first time step. In other words, we study the number of iterations of the evolution between $t_0 \rightarrow t_1$ to compute $v^1$ with the optimized Schwarz method.
\end{rmk}

\begin{rmk}
\label{rmk_zerorandom}
  Using the zero vector as the initial guess vector $g^0$ is normally more time efficient. While as mentioned in \cite{Gander2008history}, it could give wrong conclusions associated with the convergence. Thus, the zero guess vector is used when one wants to evaluate the computation time, while the random guess vector is used when study the number of iterations.
\end{rmk}

\begin{rmk}
  The theoretical optimal parameter $p$ in the Robin transmission condition is not at hand for us. We then seek for the best parameter numerically for each case if necessary.
\end{rmk}

\subsection{Time-independent linear potential}
\label{sec_vx}

In this part, we are interested in observing the efficiency of our new algorithms for the time-independent linear potential $\mathscr{V}=-x^2$. We denote by $T_{\mathrm{ref}}$ the computation time to solve the Schr{\"o}dinger equation numerically on a single processor on the entire domain and $T_{\mathrm{cls}}$ (resp. $T_{\mathrm{new}}$) the computation time of the classical (resp. new) algorithm for $N$ sub domains. We test the algorithms for $N=2, 4, 8, 16, 32, 64, 128, 256$ sub domains. 

The computation times are shown in Table \ref{Time_xx} where the fixed point method, two Krylov methods (GMRES and BiCGStab) and the LU direct method (sequential implementation and parallel implementation) are used on the interface problem (Step 2.2 in the algorithm \ref{Algo_new}). 
The time step and the mesh are $\Delta t=0.001$, $\Delta x = 10^{-5}$. Roughly speaking, in Table \ref{Time_xx} we have
\begin{align*}
  & T_{\mathrm{cls}} = T_{\mathrm{sub}} \times N_{\mathrm{iter}} \times N_T + ...,\\
  & T_{\mathrm{new}} = T_{\mathrm{sub}} \times 2 + (T_{\mathrm{sub}} \times 2 +  T_{Ld}) \times N_T + ...,
\end{align*}
where $T_{\mathrm{sub}}$ denotes the computation time for solving the equation on one sub domain for one time step, $T_{Ld}$ is the computation time for solving the interface problem, ``$...$''  represent the negligible part of computation time such as the construction of matrices for the finite element method. 
Firstly we can see that $T_{\mathrm{new}}$ with the parallel LU method takes much more times than others if $N$ is large. 
As has been explained in section \ref{sec_new} (Choice 2), since the LU method is inherently sequential and the size of \eqref{newalgoILd} is quite small, the communication is costly in our case. 
From now on, we abandon our idea using a parallel LU direct solver on the interface problem. 
For other choices, if the number of sub domains $N$ is not so large, then $T_{\mathrm{sub}} \gg T_{Ld}$ and the minimum of $N_{\mathrm{iter}}$ is 3 in all our tests (see Table \ref{Niter_xx}). If the number of sub domains $N$ is large, then $T_{Ld} \sim T_{\mathrm{sub}}$ and $N_{\mathrm{iter}} \geqslant 3$. It is for this reason that the new algorithm takes less computation time and we do not observe big difference when different solvers used on the interface problem in the new algorithm.
However, we emphasize that the new algorithm with the sequential LU method on the interface problem is the best choice. It allows to use any $p\in\mathbb{R}^+$. In other words, the algorithm is independent of the transmission condition.
Note that it is possible to use other more complex transmission condition as presented in \cite{Halpern2010_sch, Antoine2014, XF20151d}. Normally, this leads to better convergence properties of the classical algorithms. However, the new algorithm with the direct method on the interface problem is a direct algorithm.  

\begin{table}[!htbp] 
  \caption{Computation time (seconds) of the classical algorithm and the new algorithm with the fixed point method, two Krylov methods (GMRES and BiCGStab) and the LU direct method (sequential implementation and parallel implementation) where $\mathscr{V}=-x^2$, $\Delta t=0.001$, $\Delta x = 10^{-5}$ and $p=45$.}
  \centering
  \renewcommand\tabcolsep{3.0pt}
  \renewcommand{\arraystretch}{1.1}
  \begin{tabular}{|c|c|c|c|c|c|c|c|c|c|c|}
    \hline
    & $N$ & 2 & 4 & 8 & 16 & 32 & 64 & 128 & 256  \\
    \cline{2-11}  
    & $T_{\mathrm{ref}}$ &  \multicolumn{8}{c|}{594.0} \\
    \hline
    \multirow{2}{*}{Fixed Point}
    & $T_{\mathrm{cls}}$ & 11102.0 & 5909.4 & 2948.7 & 1596.5 & 788.9 & 412.7 & 199.3 & 105.4 \\
    & $T_{\mathrm{new}}$ & 730.0 & 374.7 & 185.1 & 101.5 & 49.0 & 26.1 & 13.4 & 7.9  \\
    \hline
    \multirow{2}{*}{GMRES}
    & $T_{\mathrm{cls}}$ & 1461.5 & 934.5 & 611.4 & 423.3 & 220.0 & 116.4 & 55.2 & 28.1 \\
    & $T_{\mathrm{new}}$ & 728.8 & 375.5 & 185.6 & 101.9 & 48.9 & 25.7 & 12.7 & 7.0 \\
    \hline
    \multirow{2}{*}{BiCGStab}
    & $T_{\mathrm{cls}}$ & 1807.2 & 1161.5 & 795.9 & 489.4 & 235.3 & 115.8 & 50.7 & 31.2 \\
    & $T_{\mathrm{new}}$ & 727.7 & 375.4 & 185.3 & 101.4 & 48.8 & 25.5 & 12.7 & 6.9 \\
    \hline
    \multirow{1}{*}{LU-S} 
    & $T_{\mathrm{new}}$ & 663.3 & 363.6 & 183.9 & 101.0 & 48.4 & 25.0 & 12.1 & 6.3 \\
    \hline
    \multirow{1}{*}{LU-P} 
    & $T_{\mathrm{new}}$ & 680.8 & 356.8 & 189.3 & 102.9 & 51.3 & 33.6 & 31.4 & 65.2 \\
    \hline
  \end{tabular} 
  \begin{tablenotes}
  \small
  \item \hspace{-5mm} LU-S: LU direct method (sequential implementation) in MKL Pardiso library.
  \item \hspace{-5mm} LU-P: LU direct method (parallel implementation, MPI) in MUMPS library \cite{mumps}.
  \end{tablenotes}
  \label{Time_xx}
\end{table}

\begin{table}[!htbp] 
  \caption{Number of iterations with the iterative methods used on the interface problem for some different $p$ where $\mathscr{V}=-x^2$, $\Delta t=0.001$ and $\Delta x = 10^{-5}$.}
  \label{Niter_xx}
  \centering 
  \renewcommand\tabcolsep{4.5pt}
  \begin{tabular}{|c|c|c|c|c|c|c|c|c|c|c|}
    \hline
    & \multicolumn{3}{c|}{$N=2$} & \multicolumn{3}{c|}{$N=256$}  \\
    \hline
    $p$ & Fixed point & GMRES & BiCGStab & Fixed point & GMRES & BiCGStab  \\
    \hline
    5 & 159 & 3 & 3 & 185 & 15 & 9 \\
10 & 83 & 3 & 3 & 96 & 15 & 9 \\
15 & 58 & 3 & 3 & 67 & 14 & 8 \\
20 & 45 & 3 & 3 & 52 & 14 & 8 \\
25 & 39 & 3 & 3 & 44 & 13 & 8 \\
30 & 35 & 3 & 3 & 40 & 13 & 8 \\
35 & 33 & 3 & 3 & 37 & 13 & 8 \\
40 & 32 & 3 & 3 & 36 & 13 & 7 \\
45 & 31 & 3 & 3 & 35 & 13 & 7 \\
50 & 32 & 3 & 3 & 35 & 13 & 7 \\
    \hline
  \end{tabular}
\end{table}

In conclusion, the new algorithm with the sequential LU direct method is robust, scalable and independent of the transmission condition. In addition, it takes less computation time than the classical algorithm.

\subsection{Time-dependent linear potential}
\label{sec_vtx}

In this subsection, we consider the preconditioned algorithm and the classical algorithm for the time-dependent potential $\mathscr{V}=5tx$. 
According to our numerical investigations in the previous subsection, the preconditioner \eqref{Pxg} is implemented using the choice 3 in section \ref{sec_new}, which is robust and time efficient. 

Firstly, we are interested in the efficiency of the preconditioner. The explicit construction of the interface problem allows us to study the spectral properties of $I-\mathcal{L}_{h,n}$ and $P^{-1}(I-\mathcal{L}_{h,n})$. Figure \ref{eigvalmesh2} and Figure \ref{eigvalmesh256} present their eigenvalues for some different meshes where $N=2$ and $N=256$ respectively. Without less of generality, we only consider $n=1$. It can be seen that the eigenvalues of the preconditioned linear system are really close to 1+0i, which illustrates that $P^{-1}$ is a good preconditioner.
\begin{figure}[!htbp]
  \centering
  \begin{subfigure}{0.48\textwidth}
    \centering
    \includegraphics[width=\textwidth]{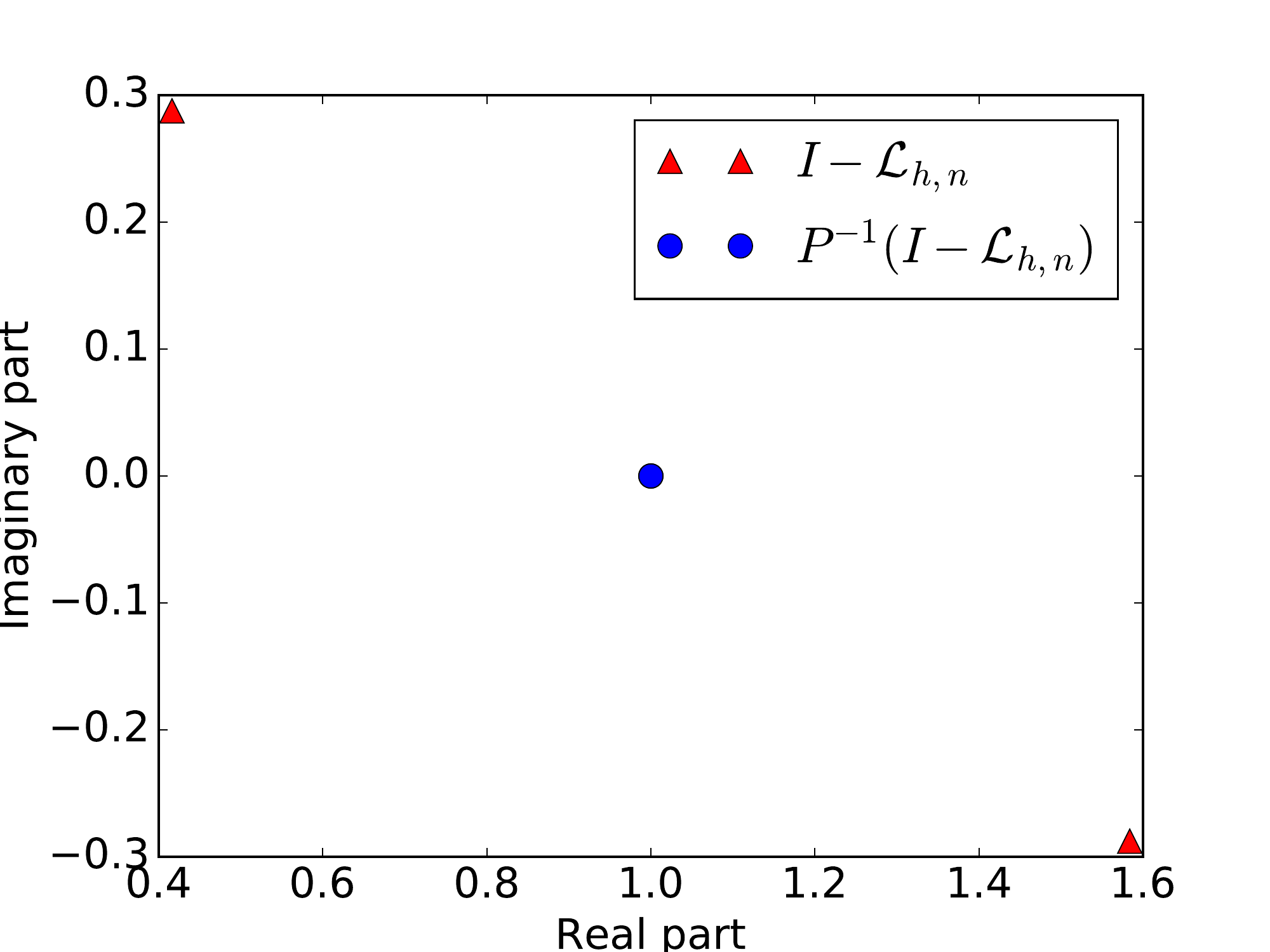}
    \caption{$\Delta t=0.01, \Delta x=0.01$}
  \end{subfigure}
  \begin{subfigure}{0.48\textwidth}
    \centering
    \includegraphics[width=\textwidth]{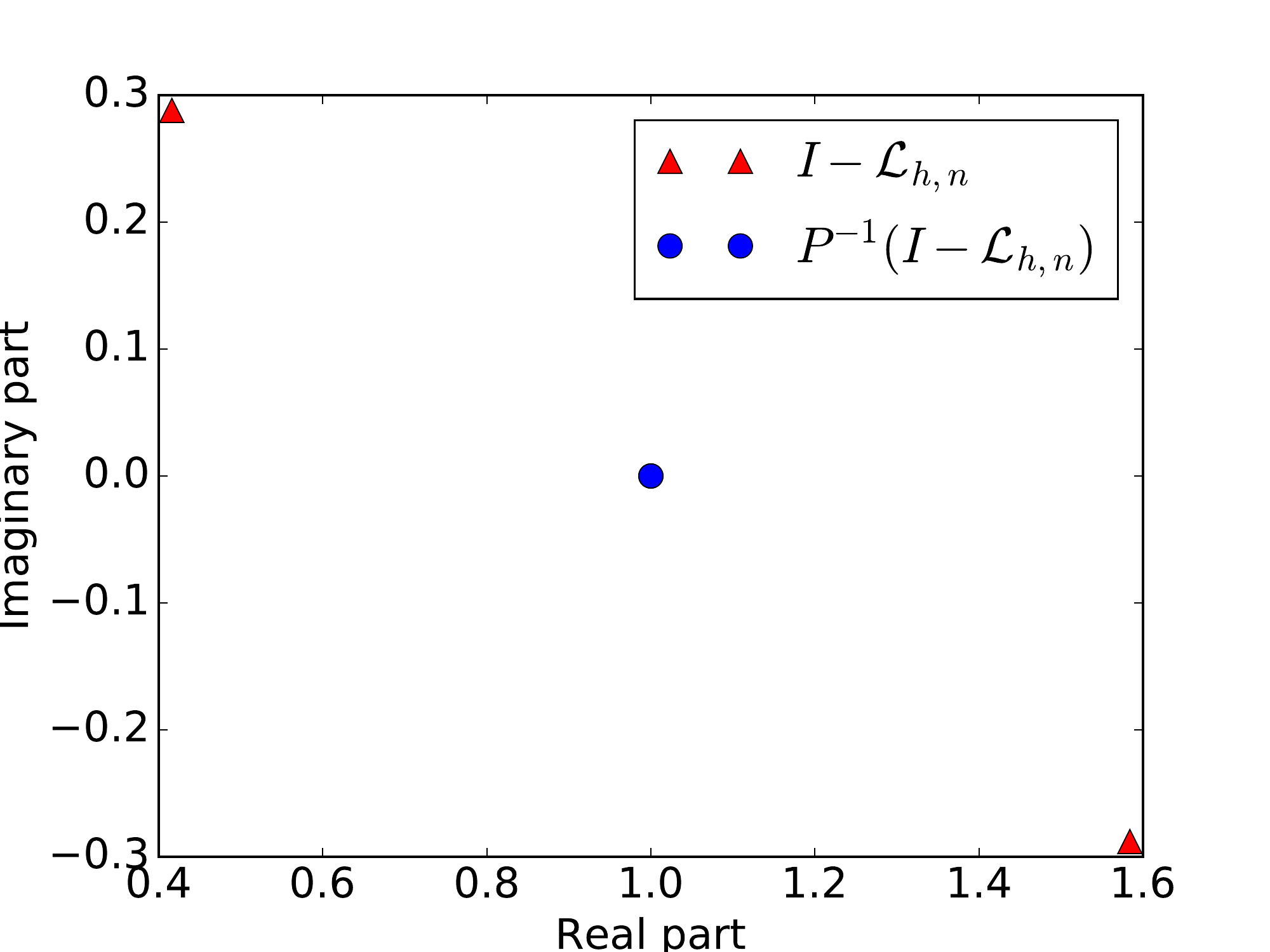}
    \caption{$\Delta t=0.01, \Delta x=5\times 10^{-5}$}
  \end{subfigure}
  \begin{subfigure}{0.48\textwidth}
    \centering
    \includegraphics[width=\textwidth]{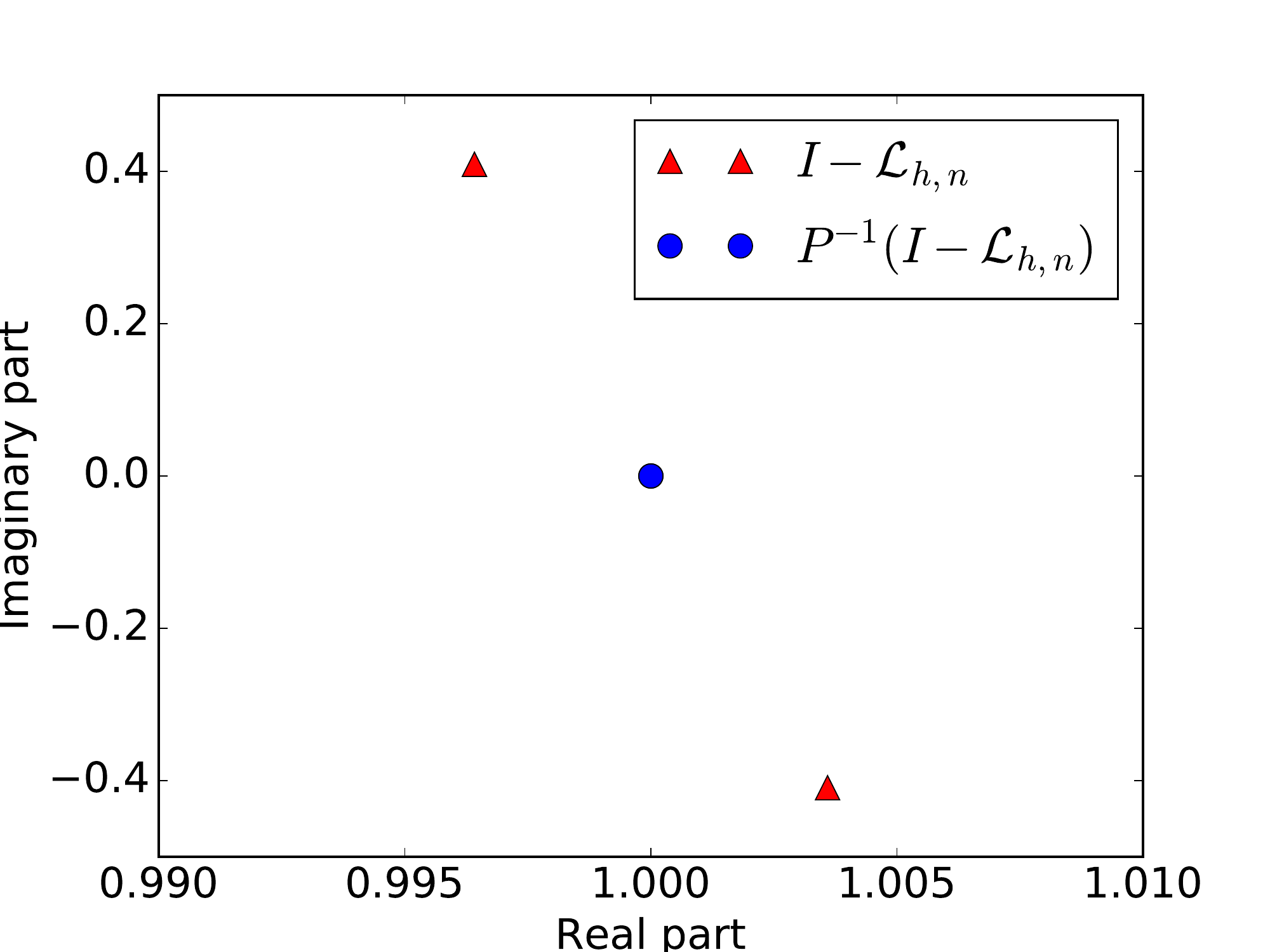}
    \caption{$\Delta t=0.001, \Delta x=0.01$}
  \end{subfigure}
  \begin{subfigure}{0.48\textwidth}
    \centering
    \includegraphics[width=\textwidth]{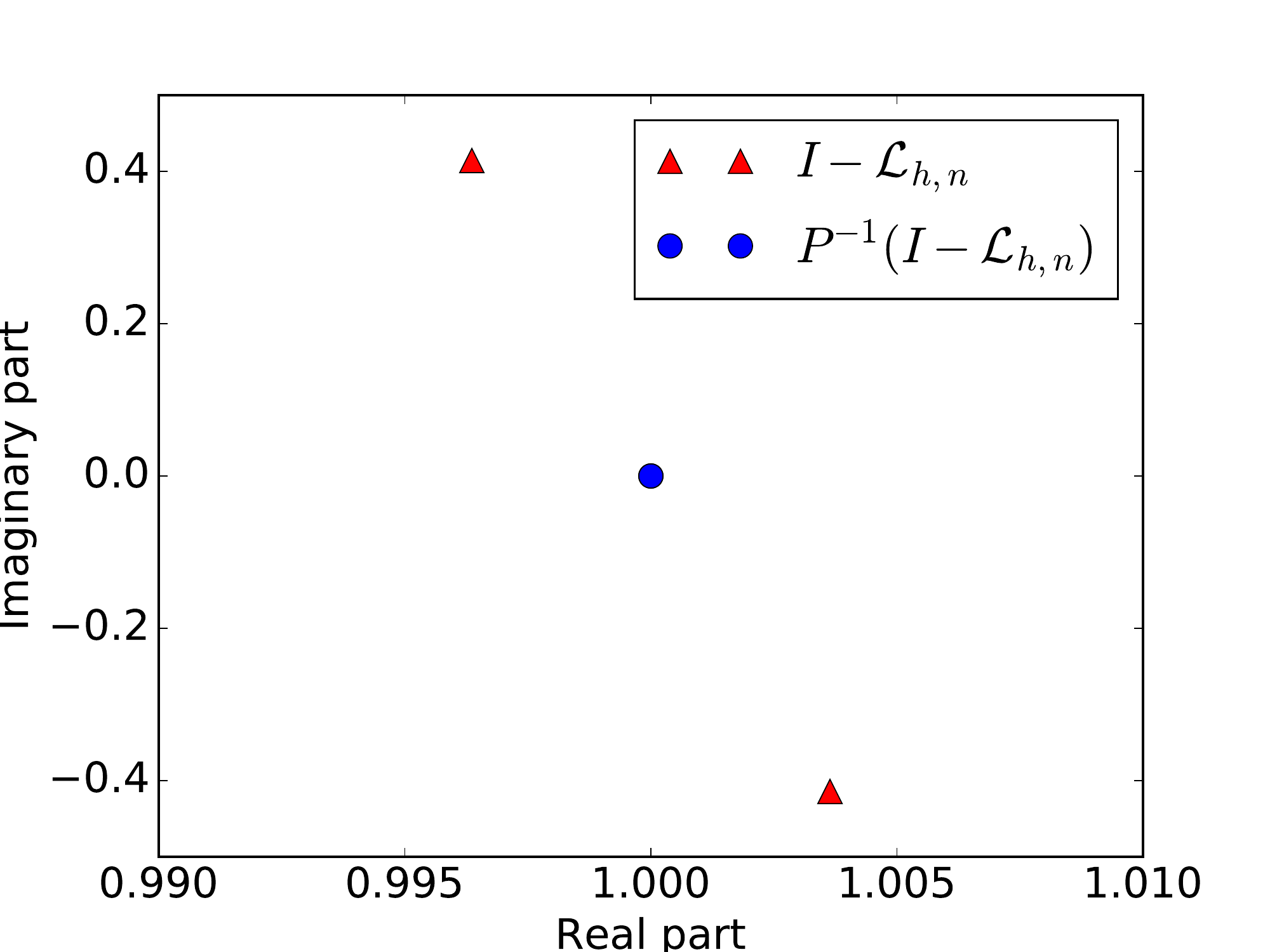}
    \caption{$\Delta t=0.001, \Delta x=5\times 10^{-5}$}
  \end{subfigure}
  \caption{Eigenvalues of $I-\mathcal{L}_{h,n}$ and $P^{-1}(I-\mathcal{L}_{h,n})$ for $N=2$ where $n=1$ and $p=45$.}
  \label{eigvalmesh2}
\end{figure}
\begin{figure}[!htbp]
  \centering
  \begin{subfigure}{0.48\textwidth}
    \centering
    \includegraphics[width=\textwidth]{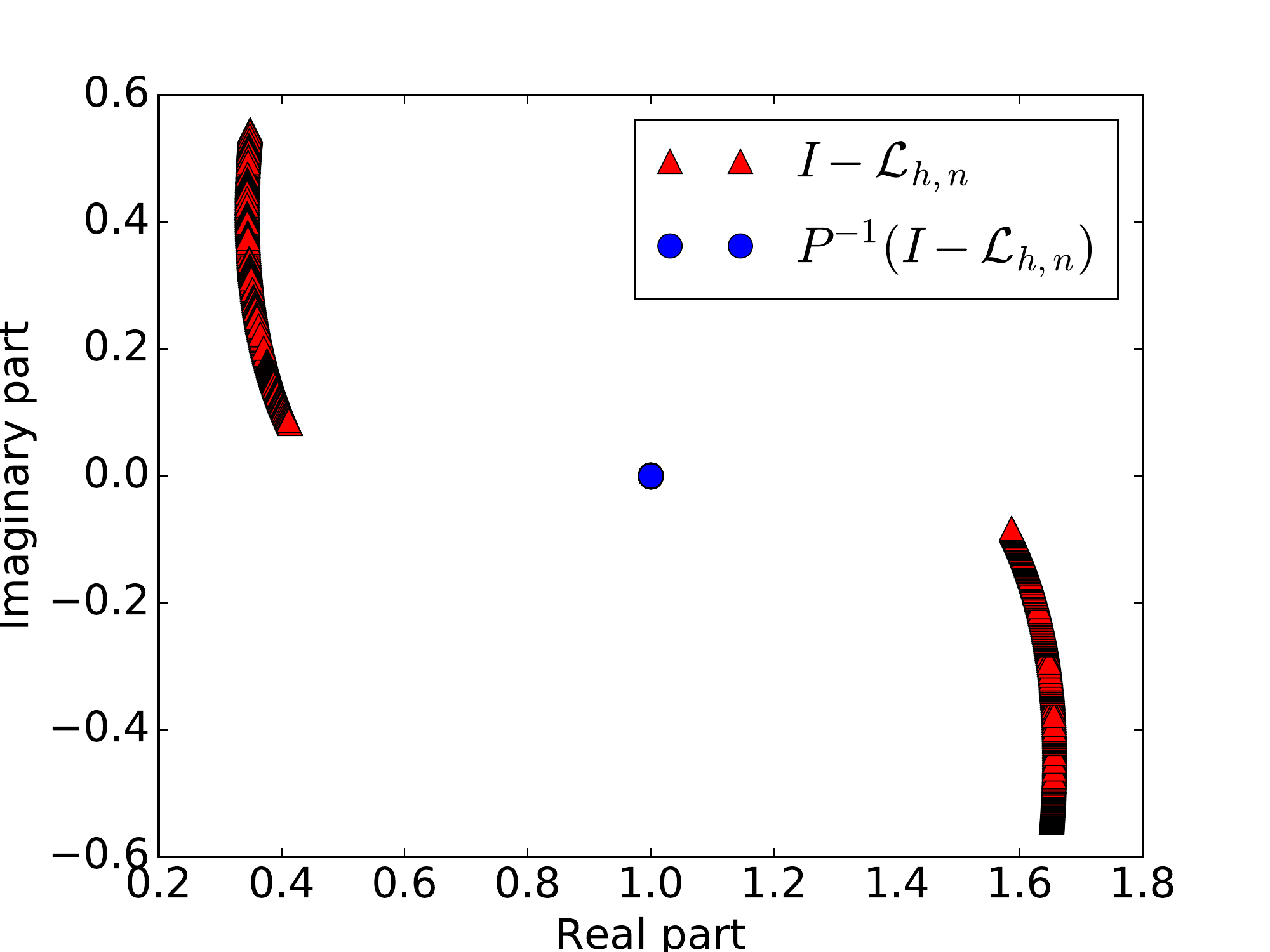}
    \caption{$\Delta t=0.01, \Delta x=5\times 10^{-5}$}
  \end{subfigure}
    \begin{subfigure}{0.48\textwidth}
    \centering
    \includegraphics[width=\textwidth]{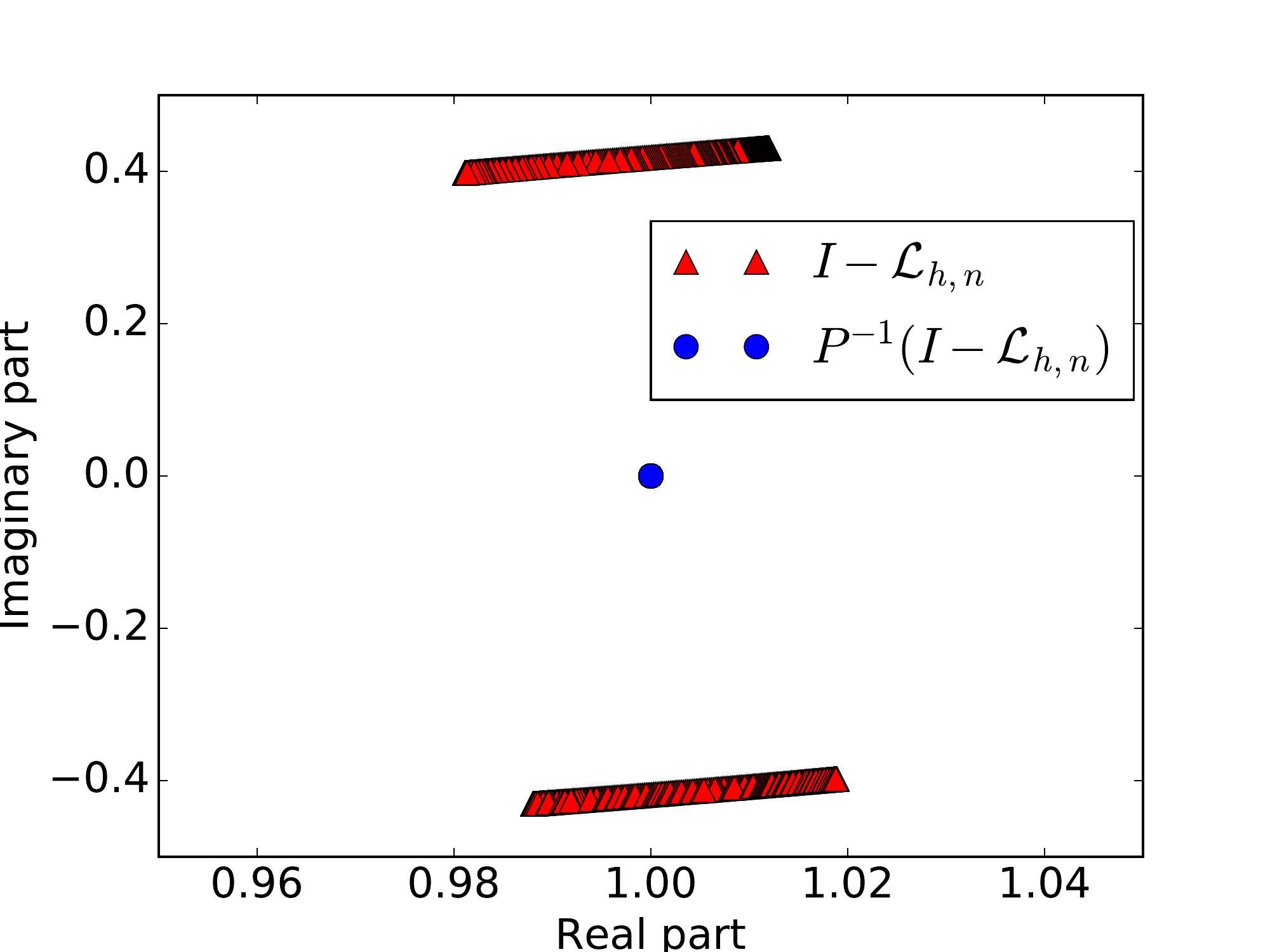}
    \caption{$\Delta t=0.001, \Delta x=5\times 10^{-5}$}
  \end{subfigure}
  \caption{Eigenvalues of $I-\mathcal{L}_{h,n}$ and $P^{-1}(I-\mathcal{L}_{h,n})$ for $N=256$ where $n=1$ and $p=45$.}
  \label{eigvalmesh256}
\end{figure}

Next, we show in Figure \ref{eigvalp2} and \ref{eigvalp256} the eigenvalues of $I-\mathcal{L}_{h,n}$ and $P^{-1}(I-\mathcal{L}_{h,n})$ for some different $p$ and for $N=2,256$ respectively. It can be seen that the parameter $p$ has almost no influence on $P^{-1}(I-\mathcal{L}_{h,n})$. 
As for $I-\mathcal{L}_{h,n}$, the distance between the eigenvalues and the value 1+0i depends on $p$. Thus, if the fixed point method is used on the interface problem, then it is essential to choose the optimal parameter $p$ in the classical algorithm. However, as shown in Table \ref{nbitervtx2} and \ref{nbitervtx256} (number of iterations of the classical algorithm and the preconditioned algorithm for $N=2$ and $N=256$), the use of the Krylov methods (GMRES and BiCGStab) can also reduce the dependency on the parameter $p$ in the transmission condition.
\begin{figure}[!htbp]
\centering
  \includegraphics[width=0.48\textwidth]{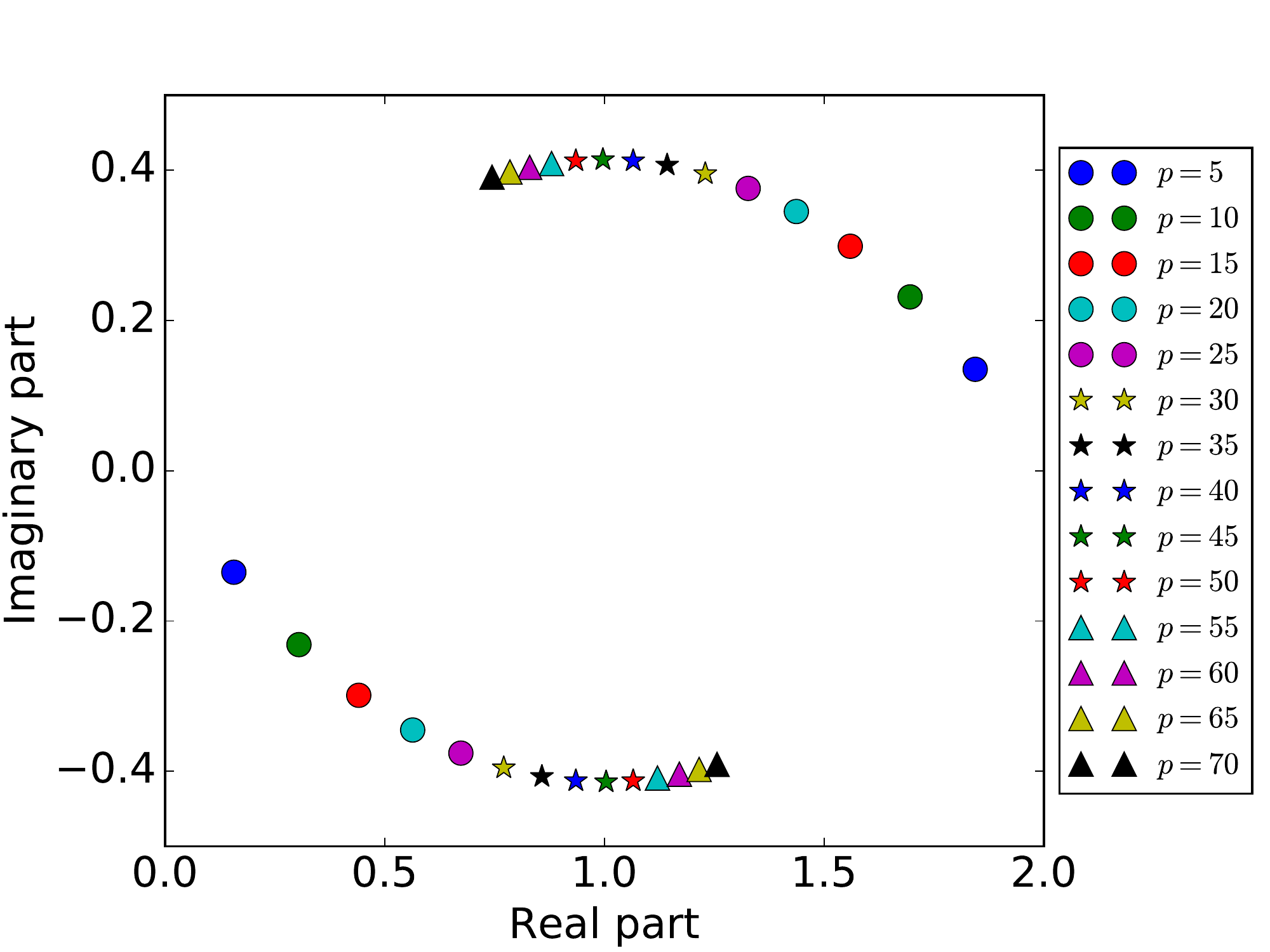}
  \includegraphics[width=0.48\textwidth]{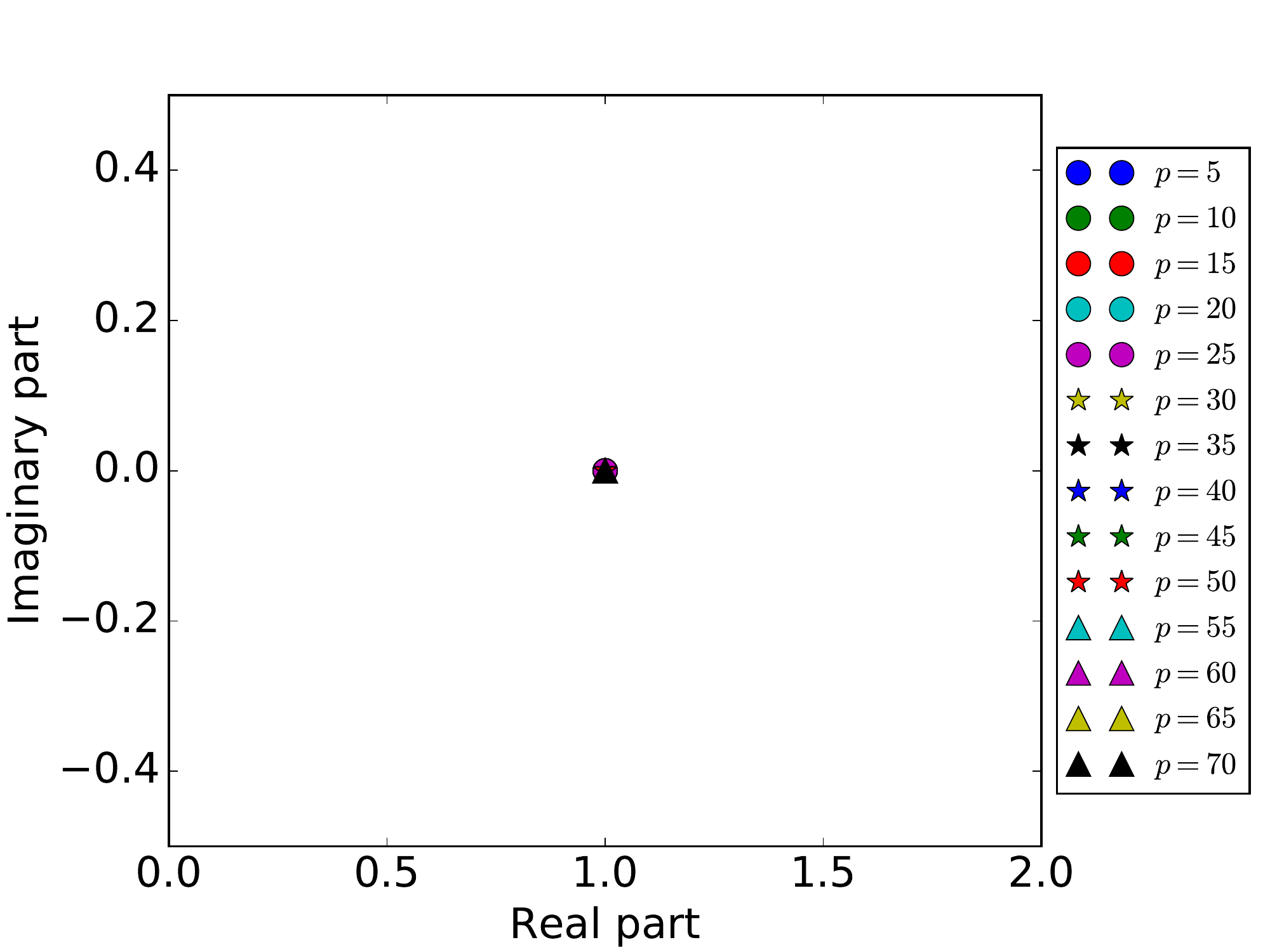}
  \caption{Eigenvalues of $I-\mathcal{L}_{h,n}$ (left) and $P^{-1}(I-\mathcal{L}_{h,n})$ (right) for different $p$ where $N=2$ and $\Delta t=0.001, \Delta x=5\times 10^{-5}$.}
  \label{eigvalp2}
\end{figure}
\begin{figure}[!htbp]
\centering
  \includegraphics[width=0.48\textwidth]{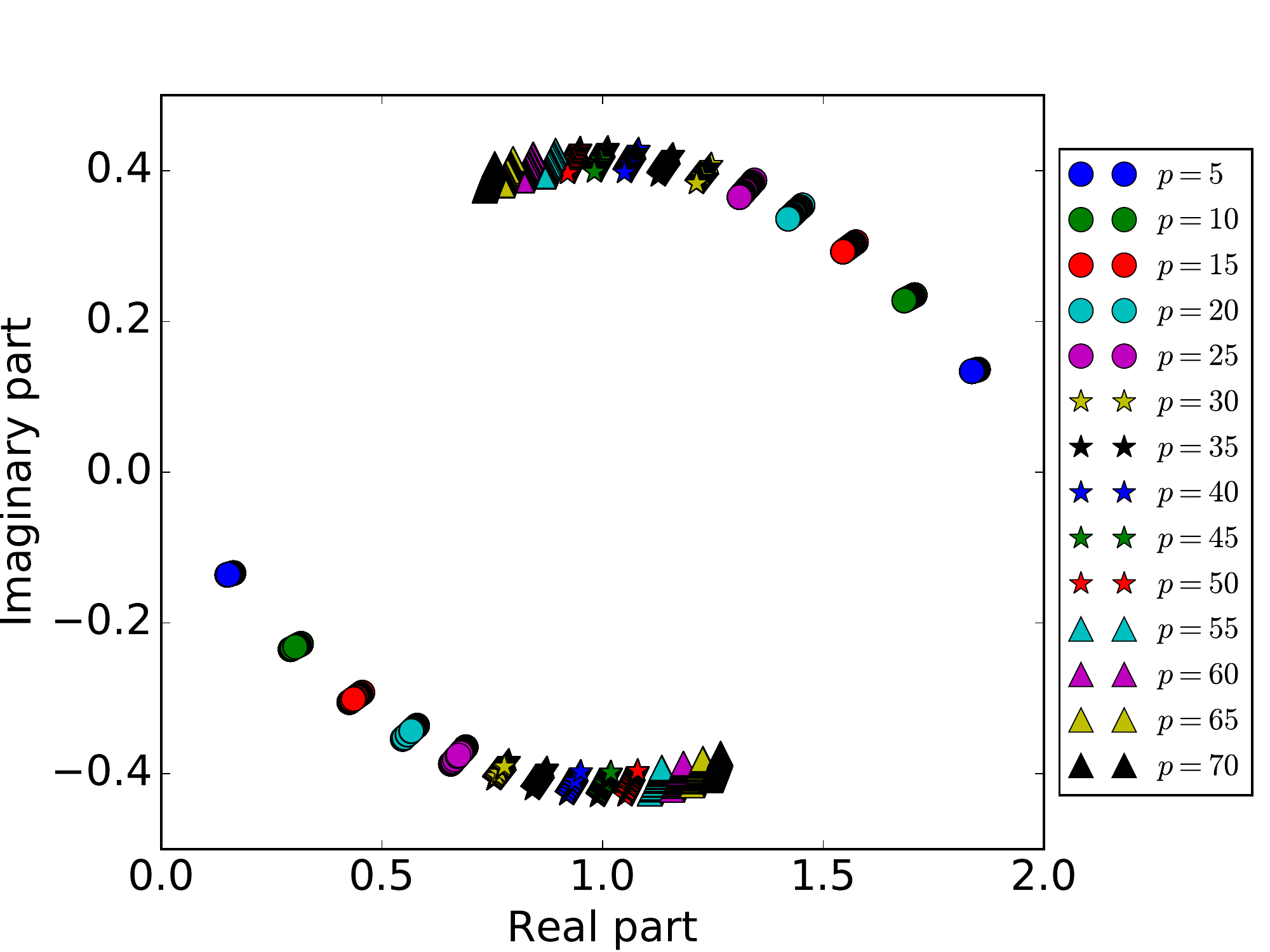}
  \includegraphics[width=0.48\textwidth]{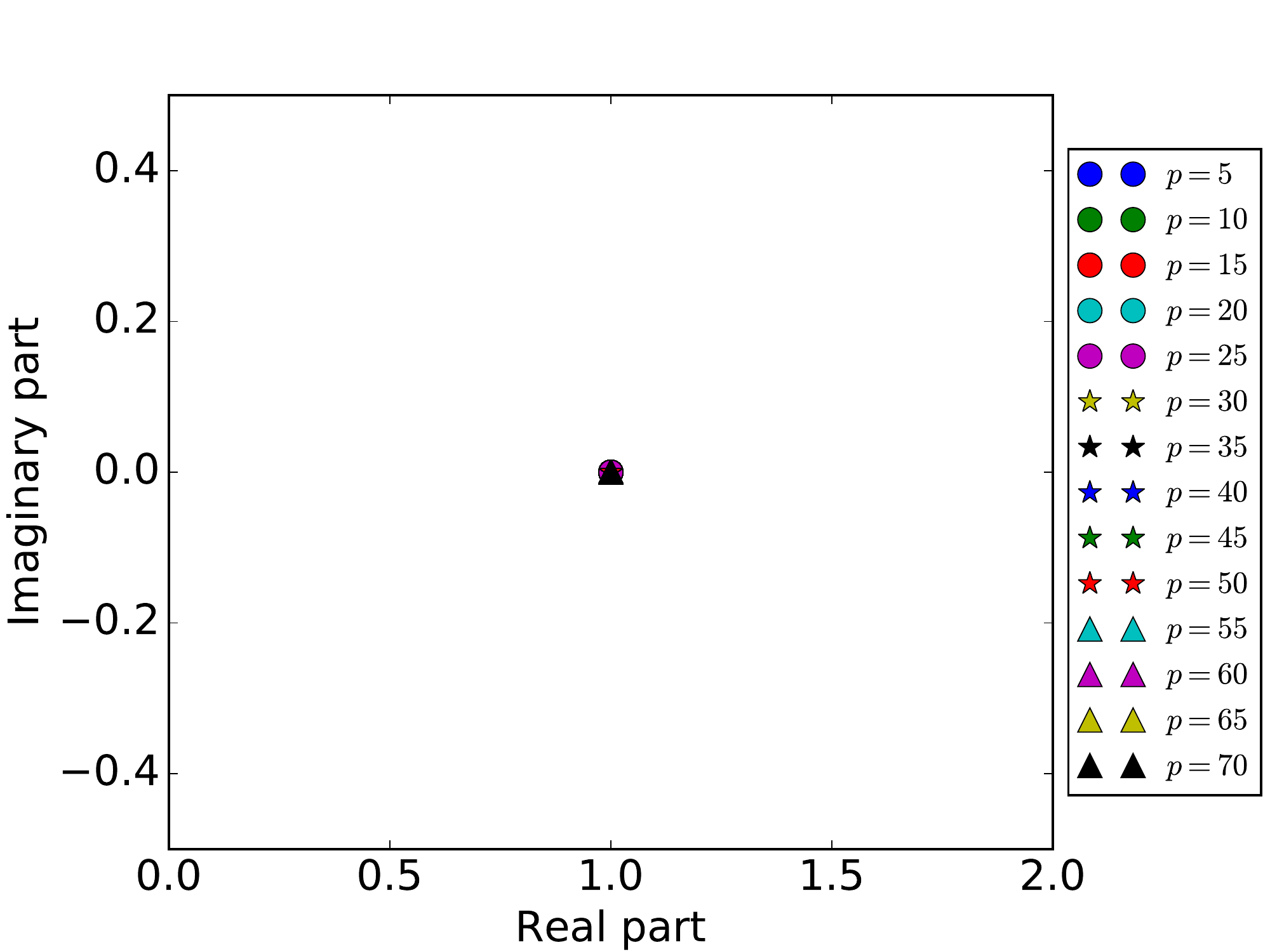}
  \caption{Eigenvalues of $I-\mathcal{L}_{h,n}$ (left) and $P^{-1}(I-\mathcal{L}_{h,n})$ (right) for different $p$ where $N=256$ and $\Delta t=0.001, \Delta x=5\times 10^{-5}$.}
  \label{eigvalp256}
\end{figure}
\begin{table}[!htbp]
  \caption{Number of iterations of the classical algorithm and the preconditioned algorithm (+PC) for $N=2$ where $\mathscr{V}=5tx$, $\Delta t=0.001$ and $\Delta x = 5 \times 10^{-5}$.}
  \centering
  \begin{tabular}{|c|c|c|c|c|c|c|c|c|c|c|c|}
    \hline
    $p$ & 5 & 10 & 15 & 20 & 25 & 30 & 35 & 40 & 45 & 50  \\
    \hline
    Fixed point & 158 & 82 & 58 & 45 & 39 & 35 & 33 & 32 & 31 & 32 \\
    \hline
    Fixed point + PC & 3 & 3 & 3 & 3 & 3 & 3 & 3 & 3 & 3 & 3\\
    \hline
    GMRES &  3 & 3 & 3 & 3 & 3 & 3 & 3 & 3 & 3 & 3 \\
    \hline
    GMRES+PC & 3 & 3 & 3 & 3 & 3 & 3 & 3 & 3 & 3 & 3 \\
    \hline
    BiCGStab & 3 & 3 & 3 & 3 & 3 & 3 & 3 & 3 & 3 & 3 \\
    \hline
    BiCGStab + PC & 2 & 2 & 2 & 2 & 2 & 2 & 2 & 2 & 2 & 2 \\
    \hline
  \end{tabular}
  \label{nbitervtx2}
\end{table}
\begin{table}[!htbp]
  \caption{Number of iterations of the classical algorithm and the preconditioned algorithm (+PC) for $N=256$ where $\mathscr{V}=5tx$, $\Delta t=0.001$ and $\Delta x = 5 \times 10^{-5}$.}
  \centering
  \begin{tabular}{|c|c|c|c|c|c|c|c|c|c|c|c|}
    \hline
    $p$ & 5 & 10 & 15 & 20 & 25 & 30 & 35 & 40 & 45 & 50  \\
    \hline
    Fixed point & 184 & 95 & 66 & 52 & 44 & 40 & 37 & 36 & 35 & 35 \\
    \hline
    Fixed point + PC & 4 & 4 & 4 & 4 & 4 & 3 & 4 & 4 & 4 & 4 \\
    \hline
    GMRES & 14 & 12 & 13 & 12 & 12 & 12 & 12 & 12 & 12 & 12 \\
    \hline
    GMRES+PC & 4 & 4 & 3 & 4 & 4 & 4 & 4 & 4 & 4 & 4 \\
    \hline
    BiCGStab & 8 & 8 & 7 & 7 & 7 & 7 & 7 & 7 & 7 & 7 \\
    \hline
    BiCGStab + PC & 3 & 3 & 3 & 3 & 3 & 3 & 3 & 3 & 3 & 2 \\
    \hline
  \end{tabular}
  \label{nbitervtx256}
\end{table}

Finally, Table \ref{time_tx} presents the computation times of the classical algorithm and the preconditioned algorithm where the fixed point method, the GMRES method and the BiCGStab method are used on the interface problem. The computation time of the preconditioned algorithm $T_{\mathrm{pc}}$ consists of three major parts: the construction of the preconditioner (denoted by $T_{1}$), the application of $\mathcal{R}_{h,n}$ to vectors (denoted by $T_2$), and the application of preconditioner (denoted by $T_{3}$). Then we have
\begin{equation}
  \label{Tpc_123}
  T_{\mathrm{pc}} \approx  T_{1} + (T_2+T_3) \times N_{\mathrm{iter}} \times N_T,
\end{equation}
where $T_{2}\gg T_3$, $T_1 \approx 2T_2$. As can be seen from Table \ref{nbitervtx2} and Table \ref{nbitervtx256}, if the fixed point method is used on the interface problem, the preconditioner allows to reduce significantly the number of iterations required for convergence. Thus, $T_{\mathrm{pc}} < T_{\mathrm{cls}}$.
\begin{table}[!htbp] 
  \centering
  \caption{Computation time (seconds) of the classical algorithm and the preconditioned algorithm (+PC) where $\mathscr{V}=5tx$, $\Delta t = 0.001$ and $\Delta x = 5 \times 10^{-5}$.}
  \renewcommand\tabcolsep{5.0pt}
  \begin{tabular}{|c|c|c|c|c|c|c|c|c|c|}
    \hline
    $N$ & 2 & 4 & 8 & 16 & 32 & 64 & 128 & 256 \\
    \hline
    $T_{\mathrm{ref}}$ & \multicolumn{8}{c|}{2100.4}\\
    \hline
    Fixed point & 3438.5 & 1732.8 & 840.0 & 413.9 & 193.7 & 80.1 & 45.9 & 24.4\\
    \hline
    Fixed point+PC & 1400.3 & 692.6 & 332.1 & 159.7 & 71.6 & 28.8 & 16.5 & 9.9 \\
    \hline
    GMRES & 1250.0 & 712.5 & 376.8 & 188.0 & 85.3 & 34.2 & 19.3 & 10.4 \\
    \hline
    GMRES+PC & 1246.1 & 664.9 & 331.9 & 157.7 & 70.6 & 28.4 & 16.2 & 10.0 \\
    \hline
    BiCGStab & 1235.1 & 710.8 & 345.8 & 179.2 & 81.1 & 36.9 & 17.7 & 10.7 \\
    \hline
    BiCGStab+PC & 1326.7 & 595.0 & 287.7 & 147.9 & 68.1 & 31.1 & 15.2 & 10.8\\
    \hline
  \end{tabular}
  \label{time_tx}
\end{table}

\begin{rmk}
\label{rmk_lin}
Let us read Table \ref{nbitervtx256} and Table \ref{time_tx} together for $N=256$ and $p=45 $ with the fixed point method used on the interface problem. The reduction of computation time is not proportional to that of number of iterations.
In fact, the number of iterations of the preconditioned algorithm is about 10 times less than that of the classical one. However, its computation time is only reduced by about 2.5 times. There are three reasons:
\begin{itemize}[leftmargin=*]
\item For each time step $n$, the linear systems \eqref{distrobin} or \eqref{AvMNL} are solved multiple times. However, it is enough to do one time the LU factorization of the matrix $\mathbb{A}_{j,n} + ip \cdot \mathbb{M}^{\Gamma_j}$, of which the computation times are same for the classical algorithm and the preconditioned algorithm.
\item As explained in remark \ref{rmk_zerorandom}, the initial vector used in Table \ref{nbitervtx256} is a random vector, while it is the zero vector in Table \ref{time_tx}. 
\item The implementation of the preconditioner takes a little computation time.
\end{itemize}
\end{rmk}

Before turning to consider the Schr\"odinger equation with a general nonlinear potential, we can conclude that $P^{-1}$ is a very efficient preconditioner. In addition, the use of Krylov methods on the interface problem allows to reduce both of the number of iterations and the computation time in the framework of the classical algorithm. 

\subsection{Nonlinear potential}
\label{sec_vnl}

The objective of the last subsection is to investigate the performance of the preconditioned algorithm for the nonlinear potential $\mathscr{V}=x^2/10-|u|^2$. 
Unlike the linear case, the interface problem \eqref{algo_d} is nonlinear here. 
Thus, it is not possible to use Krylov methods to accelerate the convergence. Table \ref{time_vnl} presents the computation time of the classical algorithm ($T_{\mathrm{cls}}$) and the preconditioned algorithm ($T_{\mathrm{pc}}$) for $N=2,4,8,16,32,64,128,256$ where the time step is $\Delta t = 0.001$, the mesh is $\Delta x = 5 \times 10^{-5}$ and $p=45$. 
We can see that both of the algorithms are scalable and the preconditioner can significantly reduce the computation time.
\begin{table}[!htbp] 
  \centering
  \caption{Computation time (seconds) of the classical algorithm and the preconditioned algorithm where $\mathscr{V}= x^2/10-|u|^2$, $\Delta t = 0.001$, $\Delta x = 5 \times 10^{-5}$ and $p=45$.}
  \label{time_vnl}
  \begin{tabular}{|c|c|c|c|c|c|c|c|c|c|}
    \hline
    $N$ & 2 & 4 & 8 & 16 & 32 & 64 & 128 & 256  \\
    \hline
    $T_{\mathrm{ref}}$ & \multicolumn{9}{c|}{2423.9}\\
    \hline
    $T_{\mathrm{cls}}$ & 11152.7 & 6147.9 & 3074.4 & 1615.2 & 767.0 & 416.6 & 202.8 & 98.0 \\
    \hline
    $T_{\mathrm{pc}}$ & 2593.4 & 1362.3 & 648.3 & 318.2 & 153.7 & 81.0 & 39.1 & 18.7   \\
    \hline
  \end{tabular}
\end{table}

We show in Table \ref{nbiterpnl} the number of iterations of the classical algorithm ($N_{\mathrm{cls}}$) and the preconditioned  algorithm ($N_{\mathrm{pc}}$) with different parameter $p$ in the transmission condition for $N=2$ and $N=256$. 
As the linear case, the preconditioned algorithm is almost independent of the parameter $p$. 
In addition, the preconditioned algorithm needs much less iterations to converge.
Note that it is for the same reasons as the linear case that the reduction of computation time is not proportional to that of number of iterations (see remark \ref{rmk_lin}). 

\begin{table}[!htbp] 
  \caption{Number of iterations of the classical algorithm and the preconditioned algorithm for some different $p$ where $\mathscr{V}= x^2/10-|u|^2$, $\Delta t = 0.001$ and $\Delta x = 5 \times 10^{-5}$.}
  \label{nbiterpnl}
  \centering
  \begin{tabular}{|c|c|c|c|c|c|c|c|c|c|c|}
    \hline
    & \multicolumn{2}{c|}{$N=2$} & \multicolumn{2}{c|}{$N=256$}  \\
    \hline
    $p$ & $N_{\mathrm{cls}}$ & $N_{\mathrm{pc}}$ & $N_{\mathrm{cls}}$ & $N_{\mathrm{pc}}$\\
    \hline
    5 & 147 & 3 & 170 & 3 \\
    10 & 79 & 3 & 90 & 3 \\
    15 & 55 & 3 & 63 & 4 \\
    20 & 44 & 3 & 50 & 4 \\
    25 & 38 & 3 & 43 & 4 \\
    30 & 34 & 3 & 39 & 4 \\
    35 & 32 & 3 & 36 & 4 \\
    40 & 31 & 3 & 35 & 4 \\
    45 & 30 & 3 & 34 & 4 \\
    50 & 31 & 3 & 34 & 4 \\
    \hline
  \end{tabular}
\end{table}

In conclusion, both of the classical algorithm and the preconditioned algorithm are robust. 
The preconditioned algorithm takes less number of iterations and less computation times than the classical one. In addition, the preconditioned algorithm is not sensible to the transmission condition.

\section{Conclusion}
\label{sec_conclusion}

We proposed in this paper a new optimized Schwarz algorithm for the one dimensional Schr\"odinger equation with time-independent linear potential and a preconditioned algorithm for the Schr\"odinger equation with general potential. These algorithms are robust and scalable. They could reduce significantly both the number of iterations and the computation time. In addition, by using the sequential LU direct method on the interface problem in the context of the new algorithm, the convergence is independent of the transmission condition. 

\section*{Acknowledgments} 

This work was granted access to the HPC and visualization resources of ``Centre de Calcul Interactif" hosted by University Nice Sophia Antipolis. 


\bibliography{Bib2}

\end{document}